\documentclass[]{article}

\usepackage{graphicx} 
\usepackage[a4paper,left=26mm,right=26mm,top=30mm,bottom=30mm,marginpar=25mm]{geometry}
\usepackage{amsmath}
\usepackage{amsthm}
\usepackage{amssymb}

\usepackage{tikz, pgf}
\usepackage{xcolor} 
\usepackage[displaymath,mathlines]{lineno}
\usetikzlibrary{decorations.markings}
\usepackage[english]{babel}
\usepackage[T1]{fontenc}
\usepackage[utf8]{inputenc}
\usepackage{amsfonts}
\usepackage{mathrsfs}
\usepackage{xfrac}
\usepackage{lmodern}
\DeclareFontFamily{OMX}{lmex}{}
\DeclareFontShape{OMX}{lmex}{m}{n}{<-> lmex10}{}
\usepackage{fix-cm}
\usepackage{listings}
\usepackage{fancyvrb}
\usepackage{enumerate}   
\usetikzlibrary{arrows}
\usepackage{float}
\usepackage{subcaption}
\usepackage{hyperref}
\usepackage{bigints}



\renewcommand{\div}{\operatorname{div}}

\newcommand{\R}{{\mathbb{R}}}

\newcommand{\lra}{\longrightarrow}

\newcommand{\ep}{\varepsilon}



\newcommand{\cB}{{\mathcal B}}
\newcommand{\cG}{{\mathcal G}}

\newcommand{\cA}{{\mathcal A}}

\newcommand{\cL}{{\mathcal L}}

\newcommand{\cM}{{\mathcal M}}

\renewcommand{\phi}{\varphi}
\renewcommand{\a}{{\alpha}}
\renewcommand{\b}{{\beta}}

\newcommand{\s}{{\sigma}}

\newcommand{\supp}{\mathrm{supp}\;}

\usepackage{fancyhdr}

\newtheorem{theorem}{Theorem}
\numberwithin{theorem}{section}
\newtheorem{proposition}{Proposition}

\newtheorem{lemma}{Lemma}

\newtheorem{definition}{Definition}

\newtheorem{remark}{Remark}
\providecommand{\keywords}[1]{\textbf{Keywords.} #1}
\providecommand{\mathsubjclass}[2]{\textbf{2020 Mathematics Subject Classification.} #1}

\author{Fabio Bagagiolo\thanks{University of Trento, Department of Mathematics, Via Sommarive 14, 38123 Povo (TN), {\tt fabio.bagagiolo@unitn.it}.\\
This author was partially supported by MUR-PRIN2020 Project (No. 2020JLWP23) “Integrated mathematical approaches to socio-epidemiological dynamics” and by ``INdAM-GNAMPA Project'' (CUP E53C23001670001) ``Modelli mean-field games per lo studio dell'inquinamento atmosferico e i suoi impatti''.}, Rossana Capuani\thanks{University of Arizona, Department of Mathematics, 617 N. Santa Rita Ave., Tucson, USA, {\tt rossanacapuani@arizona.edu}.\\
This author was partially supported by ``INdAM-GNAMPA Project'' (CUP E53C23001670001) ``Controllo ottimo infinito dimensionale: aspetti teorici ed applicazioni''.}, Luciano Marzufero\thanks{Faculty of Economics and Management, Free University of Bozen-Bolzano, Piazza Universit\`a 1, 39100 Bolzano (BZ), {\tt luciano.marzufero@unibz.it}.\\
This author was partially supported by ``INdAM-GNAMPA Project'' (CUP E53C23001670001) ``Modelli mean-field games per lo studio dell'inquinamento atmosferico e i suoi impatti'' and by the Gruppo Nazionale per l'Analisi Matematica e le loro Applicazioni (GNAMPA-INdAM).}}


\date{}

\begin{document}

\title{\Large \bf A zero-sum differential game for two opponent masses}
\maketitle
\begin{abstract}
We investigate an infinite dimensional partial differential equation of Isaacs' type, which arises from a zero-sum differential game between two masses. The evolution of the two masses is described by a controlled transport/continuity equation, where the control is given by the velocity vector field. Our study is set in the framework of the viscosity solutions theory in Hilbert spaces, and we prove the uniqueness of the value functions as solutions of the Isaacs equation. 
\end{abstract}
\keywords{zero-sum games; differential games; infinite-dimensional Isaacs equation; mass transportation; viscosity solutions.}
\par\smallskip\noindent
\mathsubjclass{Primary: 49N70; Secondary: 49N75; 49L12; 49L25; 35Q49.}
\par\bigskip\bigskip\bigskip\noindent
\section{Introduction}
In the recent paper \cite{BCM}, the present authors study a finite-horizon differential game of pursuit-evasion type between a single player and a mass of agents. The player and the mass directly control their own evolution, which for the single agent is an ordinary differential equation and for mass is given by a first order partial differential equation of transport/continuity type.  By dynamic programming techniques an infinite dimensional Isaacs equation is derived and it is proved that the value function is the unique viscosity solution on a suitable invariant subset of a Hilbert space. 
\par
In the present paper, we extend the model to a competition between two opponent masses/population of agents, say: population $X$ and population $Y$. In this case, the controlled dynamics are both given by the following transport/continuity equation
\begin{equation}
\label{eq:intro_continuity}
m^i_t+\div(\beta^im^i)=0,
\end{equation}
where, for $i\in\{X,Y\}$, $m^i:\mathbb{R}^d\times[0,T]\lra[0,+\infty[$ is the time-dependent mass-distribution of the population $i$ (i.e., for $t\in[0,T]$, $m^i(t):=m^i(\cdot ,t):\mathbb{R}^d\lra[0,+\infty[$ is the spatial distribution at time $t$) and $\beta^i:\mathbb{R}^d\times[0,T]\lra\mathbb{R}^d$ is the time-dependent control of the population $i$, that regulates the velocity and the spreading of the mass in $\mathbb{R}^d$ (i.e., for $t\in[0,T]$, $\beta^i(t):=\beta^i(\cdot,t):\mathbb{R}^d\lra\mathbb{R}^d$ is the space-control activated at time $t$).
\par
The two populations are competing in a pursuer-evader fashion, and we model such a competition by a cost as
$$
J(\bar m^X,\bar m^Y,t,\beta^X,\beta^Y)=\\
\int_t^T\ell(m^X(s),m^Y(s),s,\beta^X(s),\beta^Y(s))ds+g(m^X(T),m^Y(T)),
$$
where $\ell$ and $g$ are two suitable functions, $s\longmapsto m^X(s),m^Y(s)$ are the evolutions, as space-dependent functions on $\mathbb{R}^d$, given by the corresponding equations (\ref{eq:intro_continuity}), respectively governed by the controls $s\longmapsto \beta^X(s),\beta^Y(s)$ and starting, at time $t$, from $\bar m^X,\bar m^Y:\mathbb{R}^d\lra[0,+\infty[$. The competition is then analytically realized by the fact that the population $X$ wants to minimize $J$ and the population $Y$ wants to maximize.
\par
Using an adapted concept of non-anticipating strategies $\gamma^X,\gamma^Y$ (see \cite{ellkal} and \cite{BCD}), we define the lower and the upper values of the game as, respectively,
\begin{align*}
\underbar V(\bar m^X,\bar m^Y,t)&=\inf_{\gamma^X}\sup_{\beta^Y}J(\bar m^X,\bar m^Y,t,\gamma^X[\beta^Y],\beta^Y),\\
\overline{\text V}(\bar m^X,\bar m^Y,t)&=\sup_{\gamma^Y}\inf_{\beta^X}J(\bar m^X,\bar m^Y,t,\beta^X,\gamma^Y[\beta^X]),
\end{align*}
where the non-anticipating strategy $\gamma^i$ is a suitable function taking controls for the opponent population $\hat i\in\{X,Y\}\setminus\{i\}$ and giving a control for the population $i$ in such a way that it is not dependent upon future behavior.
\par
Since the controlled evolutions $s\mapsto m^i(s)$ given by the equations in (\ref{eq:intro_continuity}) are evolutions in a functions space (possibly a Hilbert space), by dynamic programming techniques we get that both lower and upper values are viscosity solutions of a final condition Cauchy problem for an infinite dimensional Hamilton-Jacobi-Isaacs equation which is, respectively
\begin{equation}
\label{eq:intro_isaacs1}
\begin{cases}
-\underbar V_t+\\
\inf_{b^Y}\sup_{b^X}\left\{\langle D_X\underbar V,{\rm div}(b^X\bar m^X)\rangle+\langle D_Y\underbar V,{\rm div}(b^Y\bar m^Y)\rangle
-\ell(\bar m^X,\bar m^Y,t,b^X,b^Y)\right\}=0,\\
\underbar V(\bar m^X,\bar m^Y,T)=g(\bar m^X,\bar m^Y),
\end{cases}
\end{equation}
\begin{equation}
\label{eq:intro_isaacs2}
\begin{cases}
-\overline{\text V}_t+\\
\sup_{b^X}\inf_{b^Y}\left\{\langle D_X\overline{\text V},{\rm div}(b^X\bar m^Y)\rangle+\langle D_Y\overline{\text V},{\rm div}(b^Y\bar m^Y)\rangle
-\ell(\bar m^X,\bar m^Y,t,b^X,b^Y)\right\}=0,\\
\overline{\text V}(\bar m^X,\bar m^Y,T)=g(\bar m^X,\bar m^Y),
\end{cases}
\end{equation}
where $\langle\cdot,\cdot\rangle$ is the scalar product in the Hilbert space, $D_i$ is the Fr\'echet differential with respect to $m^i$, and $b^i$ is in $B^i$, the set of the admissible constant controls $b^i:\mathbb{R}^d\lra\mathbb{R}^d$ for the population $i$. The first problem \eqref{eq:intro_isaacs1} is called the {\it upper problem} and the second one \eqref{eq:intro_isaacs2} the {\it lower problem}.
\par
By suitably restricting the domain $\cM\ni (\bar m^X,\bar m^Y,t)$ of the equations in \eqref{eq:intro_isaacs1}-\eqref{eq:intro_isaacs2}, however coherently with the controlled evolution in (\ref{eq:intro_continuity}), we get a uniqueness result proving that the lower value $\underbar V$ (resp. the upper value $\overline{\text V}$) is the unique continuous viscosity solution of the upper problem (resp. the lower problem). In particular, when the two Isaacs equations coincide (for example when the running cost $\ell$ does not depend on the controls $b^X,b^Y$), then, by uniqueness, $\underbar V=\overline{\text V}$ which means that the game has a value. 
\par
Similarly to \cite{BCM}, the main issues in performing what we said above, are that the differential game is of finite horizon type and, more crucially, that the Isaacs equations are infinite dimensional. This entails that, especially for uniqueness but not only, the restricted domain $\cM$ as above must be suitably constructed, in particular for overcoming the lacking of compactness in the infinite dimensional spaces. 
\par
In \cite{BCM} one of the two opponents is a single (finite dimensional) player and the motivation for that was a possible further study of a mean field zero-sum game between two populations. In the present paper, the opponents are two (infinite dimensional) players, the two masses. The study of a game between two opponent masses seems to be rather new, in particular in connection with minimax equilibria and the corresponding Isaacs equation set in a Hilbert space. Some problems of this type, in a rather weak set as the Wasserstein probability space, are studied in \cite{marquin, gangbo, jimmarquin, moonbasar, marigondacapuani}. However, here the goal is to work in a Hilbert setting, that is in the case where the measures on $\mathbb{R}^d$, representing the distribution of the masses, have a density with respect to the Lebesgue measure. We believe that this approach may be more prone to possible real applications, in particular concerning the construction of an optimal feedback. Some of those possible applications may be found, for example, in \cite{pattur, gomes1, colombogaravello, kolokoltsov1, kolokoltsov2, gomes2, wang}.
\par
The study of viscosity solutions for infinite dimensional Hamilton-Jacobi equations goes back to the early stage of the viscosity solutions theory \cite{crandall1, crandall2} (see also \cite{BCD} for the finite dimensional case). We also quote \cite{kocsorswi} for studies of an infinite dimensional Isaacs equation which however presents many different features with respect to the present one. In particular, in the present model, the controls directly act on the differential operator in the evolution equations (\ref{eq:intro_continuity}), which are first order equations. Other games with more than one masses are of course already studied, in particular we remind the reader to the papers \cite{achdou, cirantverzini, CFM, pedro} where, however, the game is not of the minimax form, as it is in the present paper.
\par
As already said, in this paper we set the problem in a Hilbert space, and we suitably settle it in a viable compact subsets of the space. This approach seems also to be useful for the study of a further model where the first order continuity equations (\ref{eq:intro_continuity}) are replaced by second order parabolic equations of the form 
$$
m^i_t-\sigma\Delta m^i+{\rm div}(\beta^im^i)=0.
$$
In Section \ref{sec:parabolic} we argue on such a possible future study and we also point out an interesting possible ``vanishing viscosity'' feature as $\sigma\to0^+$.
\par
In Section \ref{sec:examples} we report direct calculations on some one-dimensional examples ($d=1$), and we also argue on the extreme difficulty in doing that, due to the infinite dimensional feature of the problem.
\par
The plan of the rest of the paper is the following: in Section \ref{continuityeq1} we give some results on the solution of \eqref{eq:intro_continuity}; in Section \ref{dynprogreg} we introduce the differential game and its notation. Moreover we prove the dynamic programming principle and study the corresponding Isaacs equation.
\section{On the mass distribution: estimates and technicalities}
\label{continuityeq1}
In this section, we provide some preliminary results and useful remarks on the evolution of a mass on $\R^d$, which will be necessary for the next sections. We refer to \cite{BCM} for more details, proofs and general references.
\subsection{The continuity equation for the mass}
\label{continuityeq2}
The mass distribution on $\R^d$ can be represented by the evolution $t\longmapsto\mu(t)$ of a measure on $\R^d$, where the quantity of mass in $A\subset\R^d$ at time $t$ is the measure of $A$ at the same time: $\mu(t)(A)$. When the mass is moving according to a given time-dependent vector field $\beta:\R^d\times[0, T]\lra\R^d$, then, at least formally, the evolution of the measure $\mu$ satisfies the following continuity equation (in the distributional sense for measures):
\begin{equation}
\label{sec1:continuity}
\begin{cases}
\mu_s(x, s) + \div(\beta(x,s)\mu(x, s))=0,&(x, s)\in\mathbb{R}^d\times]t, T[\\
\mu(\cdot, t)=\bar \mu,
\end{cases}
\end{equation}
where $\bar\mu$ is the value of the measure at the initial time $t$, $T>0$ is the final time and, here and in the sequel, $\div$ stands for the spatial divergence.
\par
The well-posedness of \eqref{sec1:continuity} is closely related to the well-posedness of the following Cauchy problem in $\mathbb{R}^d$.
\begin{equation}
\label{ode}
\begin{cases}
y'(s)=\beta(y(s), s),&s\in]t, T[\\
y(t)=x.
\end{cases}
\end{equation}
Here we assume the following hypothesis: 
\begin{itemize}
\item[(H)]
the time-dependent controls $\beta$ in \eqref{ode} and \eqref{sec1:continuity} are functions from $[0, T]$ to the set $\tilde\cB$, which, for a given fixed $M>0$, is defined as
\begin{multline}
\label{campispaziali}
\tilde\cB:=\left\{b\in W^{2, \infty}(\R^d, \R^d)\cap H^1(\R^d, \R^d):\right.\\
\left.\|b\|_{L^{\infty}(\R^d, \R^d)}\leq M,\ \|b\|_{H^1(\R^d, \R^d)}\leq M,\ \|\div b\|_{W^{1, \infty}(\R^d)}\leq M\right\},
\end{multline}
and moreover they belong to 
\begin{equation}
\label{vectorfields}
\cB:=\{\beta\in L^2([0, T], W^{2, \infty}(\R^d, \R^d)\cap H^1(\R^d, \R^d)):\beta(\cdot, t)\in\tilde\cB\ \text{ for a.e. }t\}.
\end{equation}
\end{itemize}
Under such hypothesis, the existence and the uniqueness of solutions to equations \eqref{ode} and \eqref{sec1:continuity}  are guaranteed for any initial data $(x, t)$ and Borel measure $\bar\mu$, respectively. Weaker conditions may also suffice, as noted in references \cite{DiPernaLions, CrippaTesi, AmbCri2014}. In particular, the solution to \eqref{sec1:continuity} is given by the push-forward of $\bar\mu$ through the flow $\Phi(\cdot,t,s):\mathbb{R}^d\lra\mathbb{R}^d$ associated to \eqref{ode}. Due to the (only spatial) regularity assumed in \eqref{campispaziali} for the admissible controls, we have the following first remark.
\begin{remark}
\label{remark1}
If all the fields $b\in\tilde{\cal B}$ have their support contained in the same bounded subset of $\R^d$, then the $L^2$-closure $\overline{\tilde\cB}$ of \eqref{campispaziali} is a compact subset of $L^2(\mathbb{R}^d,\mathbb{R}^d)$. This is important because the compactness of the admissible constant in time controls (which here are functions of the space $x$, anyway) will be crucial in order to construct suitable non-anticipating strategies for deriving the infinite dimensional Isaacs equations satisfied by the values of the game.
\end{remark}
When $\bar\mu$ is absolutely continuous with respect to the Lebesgue measure $\cL^d$, that is $\bar\mu= \bar m\mathcal{L}^d$, where $\bar m:\R^d\lra\R$ is the density, all the measures $\mu(\cdot, s)$ are absolutely continuous with respect to $\mathcal{L}^d$. Their density $m(\cdot, s)$ can be explicitly computed as (see \cite{AmbCri2014})
\begin{equation}
\label{eq:formula}
m(\cdot, s)=\frac{\bar m(\cdot)}{{|\rm{det}} J \Phi(\cdot, t, s)|} \circ \Phi^{-1}(\cdot, t, s),
\end{equation}
where $\Phi^{-1}(\cdot, t, s)$ is the inverse of the flow associated to \eqref{ode} and $J\Phi$ is its Jacobian matrix. In the following, let $m(x, s; \beta, t, \bar m)$ denote the value at $(x, s)\in\R^d\times]t, T]$ of the density of the solution of \eqref{sec1:continuity} with $\beta\in\cB$, where the initial datum at time $t$, $\bar\mu$, has density $\bar m$. From formula \eqref{eq:formula} and the regularities in \eqref{campispaziali}-\eqref{vectorfields}, the following results can be proved.
\begin{proposition}
\label{propositionestimates}
Suppose that $(H)$ holds and $\bar m\in H^1(\mathbb R^d)\cap W^{1, \infty}(\R^d)$. Then
\begin{itemize}
\item[(i)] for every $t\in[0, T]$ and $s\in[t, T]$ 
$$
x\longmapsto m(x, s; \beta, t, \bar m)\text{ belongs to }H^1(\mathbb R^d)\cap W^{1, \infty}(\R^d),
$$
\item[(ii)] for every $t\in[0, T]$
$$
m(\cdot, \cdot; \beta, t, \bar m)\in C^0([t, T], H^1(\R^d)\cap W^{1, \infty}(\R^d)),
$$
where the modulus of continuity is independent of $t, \beta$ and only depends on the norms of $\bar m$;
\item[(iii)] for every $t\in[0, T]$ and $s\in[t, T]$ 
\begin{equation}\label{stimamw}
\|m(\cdot, s; \beta, t, \bar m)\|_{W^{1, \infty}(\R^d)}\leq e^{MT}\sup\{1+\tilde M,L_{\Phi^{-1}}\}\|\bar m(\cdot)\|_{W^{1, \infty}(\R^d)},
\end{equation}
where the constant $\tilde M>0$ encloses the bounds for the spatial derivative of $\div\beta$, $\Phi$ and $\Phi^{-1}$, independent of $t, \beta$, and $L_{\Phi^{-1}}$ is the Lipschitz constant of $\Phi^{-1}(\cdot, t, s)$.
\end{itemize}
Moreover, it holds the following
\begin{equation}
\label{dipendenzacont1}
\|m(\cdot, s; \beta, t_1, \bar m^1)-m(\cdot, s; \beta, t_2, \bar m^2)\|_{H^1(\mathbb R^d)}\leq\tilde L\left(|t_1-t_2|+\|\bar m^1(\cdot)-\bar m^2(\cdot)\|_{H^1(\mathbb R^d)}\right),
\end{equation}
for every $t_1,t_2\in[0, T]$, $s\in[\max\{t_1,t_2\}, T]$ and for some $\tilde L$ independent of $t_1,t_2$ and $\beta$.
\end{proposition}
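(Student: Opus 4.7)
The plan is to derive all four claims from the explicit representation formula \eqref{eq:formula}, with most of the work going into bounds on the flow $\Phi(\cdot,t,s)$ generated by \eqref{ode} and on its Jacobian determinant. Under assumption (H), since $\beta(\cdot,s)\in W^{2,\infty}(\R^d,\R^d)$ with uniform bound $M$, Gronwall arguments applied to \eqref{ode} together with its first and second variational equations yield that $\Phi(\cdot,t,s)\in W^{2,\infty}(\R^d,\R^d)$, with bounds on $D_x\Phi$ and $D_x^2\Phi$ depending only on $M$ and $T$. Liouville's formula
\[
\det J_x\Phi(x,t,s)=\exp\!\left(\int_t^s (\div\beta)(\Phi(x,t,\tau),\tau)\,d\tau\right)
\]
then shows that $|\det J_x\Phi|$ is bounded above and below by $e^{\pm MT}$; since $\div\beta\in W^{1,\infty}$ by (H), this determinant is also Lipschitz in $x$ with a constant controlled by $M$ and $T$. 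The inverse flow $\Phi^{-1}(\cdot,t,s)$ inherits the bi-Lipschitz and $W^{2,\infty}$ bounds via the inverse function theorem, giving in particular the constant $L_{\Phi^{-1}}$.

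Once these flow estimates are in hand, claim (i) is immediate from \eqref{eq:formula}: the numerator $\bar m$ is assumed to be in $H^1\cap W^{1,\infty}$, the denominator $|\det J_x\Phi|$ is in $W^{1,\infty}$ and bounded away from zero, and composition with the bi-Lipschitz $C^{1,1}$ map $\Phi^{-1}$ preserves both spaces. For the quantitative estimate (iii), I would differentiate \eqref{eq:formula} via the chain rule: the $\bar m$-term contributes $\|\bar m\|_{W^{1,\infty}}\,L_{\Phi^{-1}}$, the Jacobian term contributes the factor $1+\tilde M$ (where $\tilde M$ collects the $W^{1,\infty}$ bounds on $\div\beta$ and the spatial derivatives of $\Phi,\Phi^{-1}$), and Liouville's formula supplies the exponential factor $e^{MT}$.

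For (ii), the strategy is to show that $s\mapsto\Phi(\cdot,t,s)$ is Lipschitz from $[t,T]$ into $W^{2,\infty}(\R^d,\R^d)$ with Lipschitz constant controlled by $M$ alone (directly from \eqref{ode} and its first two variational equations), and similarly for $\Phi^{-1}$ and $\det J_x\Phi$. The continuity in $s$ of $m(\cdot,s;\beta,t,\bar m)$ in both $H^1$ and $W^{1,\infty}$ then follows from the fact that the map $(\Psi,J)\mapsto (\bar m/|J|)\circ\Psi$ is Lipschitz in the relevant norms on the bounded set where $\Psi,J$ take values, with a Lipschitz constant depending only on $\|\bar m\|_{H^1\cap W^{1,\infty}}$ and $M,T$. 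For the final stability estimate \eqref{dipendenzacont1}, I would split
\[
m(\cdot,s;\beta,t_1,\bar m^1)-m(\cdot,s;\beta,t_2,\bar m^2)
=\big[m(\cdot,s;\beta,t_1,\bar m^1)-m(\cdot,s;\beta,t_2,\bar m^1)\big]
+\big[m(\cdot,s;\beta,t_2,\bar m^1)-m(\cdot,s;\beta,t_2,\bar m^2)\big].
\]
The second bracket is handled by the linearity of formula \eqref{eq:formula} in $\bar m$ and the bounds already established, giving a term of order $\|\bar m^1-\bar m^2\|_{H^1}$. The first bracket is controlled by the continuous dependence of the flow $\Phi(\cdot,t,s)$ on the initial time $t$, which is Lipschitz via \eqref{ode} and $\|\beta\|_{L^\infty}\le M$.

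The main obstacle, I expect, is the uniform $H^1$ bookkeeping for \eqref{dipendenzacont1}: after the change of variables induced by $\Phi^{-1}$, the gradient $\nabla_x m$ produces several terms involving $D\Phi^{-1}$, $D^2\Phi$ and $D(\div\beta)$, and each must be estimated in $L^2(\R^d)$ with a constant independent of $t_1,t_2,\beta$. The key is precisely that assumption (H) provides uniform $H^1$ \emph{and} $W^{2,\infty}$ control on $\beta$, which together with the $W^{1,\infty}\cap H^1$ assumption on $\bar m$ is exactly what is needed to close the $H^1$ estimate after composition with $\Phi^{-1}$; without the $H^1$ part of \eqref{campispaziali}, the square-integrability of the derivative terms would be lost. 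Once this step is carried out, the remaining arguments are routine Gronwall and chain-rule computations.
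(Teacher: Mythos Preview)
Your approach is correct and is precisely what the paper indicates: the paper does not give a proof here but states that ``from formula \eqref{eq:formula} and the regularities in \eqref{campispaziali}--\eqref{vectorfields}, the following results can be proved'' and refers to \cite{BCM} for the details. Your derivation via Liouville's formula, Gronwall estimates on the flow and its variational equations, and the chain rule applied to \eqref{eq:formula} is exactly the route the authors have in mind.
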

Thanks to Proposition \ref{propositionestimates}, we have the following second remark.
\begin{remark}
\label{osscontinuita}
By Proposition \ref{propositionestimates} and assumption $(H)$, we observe that $\div(\beta m), \frac{\partial m}{\partial t}\in L^2(\R^d\times[0, T])$. Moreover, since $\mu$ is the solution of \eqref{sec1:continuity} in the sense of measures, we have that its density $m$ is solution, in the distributional sense for measures, of $\frac{\partial m}{\partial t}+\div(\beta m)=0$. Since $\frac{\partial m}{\partial t}, \div(\beta m)\in L^2(\R^d\times[0, T])$, $m$ is the unique solution in $L^2(\R^d\times[0, T])$ (see \cite{CrippaTesi}). In particular, we have that $\frac{\partial m}{\partial t}=-\div(\beta m)$ as functions in $L^2(\R^d\times[0, T])$. From this, we can conclude that if $\phi\in C^1(L^2(\R^d))$, then the map
\begin{align*}
\xi&:[t, T]\lra\R\\
\tau&\longmapsto\xi(\tau)=\phi(m(\cdot, \tau; \beta, t, \bar m))
\end{align*}
is differentiable and $\xi'(\tau)=-\langle D_m\phi, \div(\beta m)\rangle_{L^2(\R^d)}$, where $D_m\phi$ denotes the Fr\'echet derivative of $\phi$ w.r.t. $m$ and $\langle\cdot, \cdot\rangle_{L^2(\R^d)}$ the scalar product in $L^2(\R^d)$. This last fact is essential for working with dynamic programming and viscosity solutions.
\end{remark}
\subsection{A suitable invariant set for the evolution of the mass}
\label{sec:invariantset}
The aim of this section is to construct a suitable set $\cM$ which is compact in $L^2(\R^d)\times[0, T]$ and invariant for the controlled evolution $m(\cdot, s, \a, t, \bar m)$. Up to proper adjustments, this will be the set in which we are going to consider the Hamilton-Jacobi-Isaacs equation of the differential game (see \S\ref{HJIforV}).
\par
We fix an open bounded subset $\Omega\subset\R^d$ and a constant $K>0$ such that all the admissible initial distributions of the mass belong to
$$
\cG=\left\{m_0\in W^{1,\infty}(\Omega):\|m_0\|_{W^{1,\infty}(\Omega)}\leq K,\ \supp m_0\subset\Omega\right\}.
$$
Referring to point $(iii)$ in the Proposition \ref{propositionestimates}, we define $B:=\max\{1+\tilde M,L_{\Phi^{-1}}\}K$. For all $m_0\in\cG$, $\b\in\cB$ and $s\in[0,T]$, we obtain
\begin{equation}
\label{stimanuovanuova}
\|m(\cdot,s;\b, 0, m_0)\|_{W^{1,\infty}(\Omega)}\le Be^{MT}.
\end{equation}
Recalling that $M$ is the bound for $\|\b(\cdot,t)\|_\infty$ (see \eqref{campispaziali}), we define, for all $t\in[0,T]$ and for all $s\in[t,T]$, the bounded sets
$$
\Omega_1(t)=\overline B(\Omega, Mt),\ \ \Omega_1(s,t)=\overline B\left(\Omega_1(t),M(s-t)\right),
$$
being, as example, $\overline{B}(\Omega, Mt)=\{x\in\R^d:\text{dist}(x, \Omega)\leq Mt\}$. A sort of semigroup property holds: $\Omega_1(0)=\overline\Omega$, $\Omega_1(s,t)=\Omega_1(s)$ for all $t\in[0,s]$ and, in particular, $\Omega_1(T)=\Omega_1(T,t)$. Namely, assuming $\bar m\in W^{1, \infty}(\R^d)\cap H^1(\R^d)$ with $\supp\bar m\subset\Omega_1(t):=\overline{B}(\Omega, Mt)$, we have, independently of $\b\in\cB$,
$$
\supp m(\cdot, s'; \b, \bar m^X)\subset\Omega_1(s',t)\subset\Omega_1(s'',t)\subset\Omega_1(T,t)=\Omega_1(T)\label{stimanuova0}\\
$$
for all $t\le s'\le s''\le T$. Observe that all such functions $m$ can be extended to the entire set $\Omega_1(T)$ by setting them equal to $0$ outside their support. Now, for every $t\in[0, T]$, let $K(t)\subset H^1(\Omega_1(T))$ be a compact set such that $K(t')\subset K(t)$ for every $0\leq t'\leq t\leq T$, $m\in K(t)$ if $K(t_n)\ni m_n\lra m$ in $H^1(\Omega_1(T))$ and $t_n\to t$ in $[0, T]$, and for every $0\leq t'\leq t\leq T$, $K(t)$ contains $m(\cdot, t; \beta, t', \bar m)$ for every $\beta\in\cB$ and $\bar m\in K(t')$. Observe that, by \eqref{dipendenzacont1} and suitable possible convergences on the set of controls $\beta$, a simple example of such compact sets $K(t)$ may be the points of all trajectories up to the time $t$ started at time $0$ from points of a compact set $K$ in $H^1(\Omega_1(T))$. Recalling \eqref{stimanuovanuova}, then for all $t\in[0,T]$, we define the following sets:
\begin{multline*}
\cM(t):=\left\{\bar m\in W^{1, \infty}(\Omega_1(T))\cap K(t):\supp\bar m\subset\Omega_1(t),\right.\\
\left. \|m(\cdot, s; \b, t, \bar m)\|_{W^{1, \infty}(\Omega_1(T))}\leq Be^{MT} \ \text{for all }s\in[t,T]\text{ and }\b\in\cB\right\};
\end{multline*}
\begin{multline*}
\cM=\bigcup_{t\in[0, T]}\left(\cM(t)\times\{t\}\right)=\left\{(\bar m, t)\in W^{1, \infty}(\Omega_1(T))\cap K(t)\times[0, T]:\right.\\
\left.\supp\bar m\subset\Omega_1(t),\ \|m(\cdot, s; \b, t, \bar m)\|_{W^{1, \infty}(\Omega_1(T))}\leq Be^{MT}\ \text{for all }s\in[t,T] \text{ and }\ \beta\in\cB\right\}.
\end{multline*}
By construction, $\cM$ is compact in $H^1(\Omega_1(T))\times[0, T]$ and hence also in $L^2(\Omega_1(T))\times[0, T]$, as indeed similarly requested in \cite{BCM}.
\par
Thanks to compactness, the $H^1$ and $L^2$ norms are topologically equivalent in $\cM$. That is, there exists a modulus of continuity $\tilde\omega$ such that for every $(m_1 ,t_1),(m_2, t_2)\in\cM$ one has 
\begin{equation}
\label{modcontm}
\|m_1-m_2\|_{H^1(\Omega_1(T))}\leq\tilde\omega\left(\|m_1-m_2\|_{L^2(\Omega_1(T))}\right).
\end{equation}
Furthermore, $\cM$ contains all the possible evolutions $s\longmapsto(m(\cdot, s; \b, t, \bar m), s)=(m(s), s)$ (with field $\beta\in\cB$) starting from any $(\bar m, t)\in\cM$. In fact, for all $s\in[t,T]$ and for $\tau\in[s,T]$, it holds
$$
\|m(\cdot,\tau;\b,s,m(s))\|_{W^{1, \infty}(\Omega_1(T))}=\|m(\cdot,\tau;\b,t,\bar m)\|_{W^{1, \infty}(\Omega_1(T))}\le Be^{MT}.
$$
Thus $(m(s), s)\in\cM$ because $\supp m(s)\subset\Omega_1(s)$ since $\supp\bar m\subset\Omega_1(t)$. Therefore $\cM$ is invariant for the evolution of the continuity equation with field $\b\in\cB$. Specifically, it contains all the trajectories $t\longmapsto(m(t), t)$ starting from $(m_0, 0)$ with $m_0\in\cG\cap K(0)$.
\section{The differential game model}
\label{dynprogreg}
In this section, we introduce the model of the differential game between two opponent masses, we prove the dynamic programming principle and study the corresponding Isaacs equation.
\par
The controlled equations for the evolution of the masses $m^X$ and $m^Y$ are given by
\begin{equation}
\label{contequationformassX}
\begin{cases}
m^X_s(\cdot, s)+\div(\a(\cdot, s)m^X(\cdot, s))=0,&s\in]t, T]\\
m^X(\cdot, t)=\bar m^X,
\end{cases}
\end{equation}
\begin{equation}
\label{contequationformassY}
\begin{cases}
m^Y_s(\cdot, s)+\div(\b(\cdot, s)m^Y(\cdot, s))=0,&s\in]t, T]\\
m^Y(\cdot, t)=\bar m^Y,
\end{cases}
\end{equation}
where $t\in[0, T]$, $\bar m^i\in H^1(\R^d)\cap W^{1, \infty}(\R^d)$, $i\in\{X,Y\}$, and the controls $\a$ and $\b$ belong to
\begin{equation}
\label{controlsX}
\cA(t):=\{\a\in L^2([t, T], W^{2, \infty}(\R^d, \R^d)\cap H^1(\R^d, \R^d)):\a(\cdot, s)\in\tilde\cA\ \text{ for a.e. }s\}
\end{equation}
and
\begin{equation}
\label{controlsY}
\cB(t):=\{\b\in L^2([t, T], W^{2, \infty}(\R^d, \R^d)\cap H^1(\R^d, \R^d)):\b(\cdot, s)\in\tilde\cB\ \text{ for a.e. }s\}
\end{equation}
respectively, where the sets $\tilde\cA$ and $\tilde\cB$ are taken as \eqref{campispaziali} with the constant $M$ which can be generally different for each population.
\par
Consequently, the same holds for the estimates in Proposition \ref{propositionestimates}: the constants in \eqref{stimamw}-\eqref{dipendenzacont1} can generally be different for populations $X$ and $Y$. Note also that, in contrast to the previous section and the Introduction, in \eqref{controlsX}-\eqref{controlsY} we explicitly introduce the initial-time-dependence of the controls in the notation. We call $\alpha$ the controls for the mass $X$ (named as $\beta^X$ in the Introduction) and $\beta$ the controls for the mass $Y$ (named as $\beta^Y$ in the introduction). Coherently, in the following we will use $a$ to denote a generic constant (in time) control for the mass $X$ belonging to $\tilde\cA$, as well as $b$ to denote for a generic constant (in time) control for the mass $Y$ belonging to $\tilde\cB$.
\par
Let us consider the following.
\begin{itemize}
\item Running cost
\begin{align*}
\ell:H^1(\R^d)\times H^1(\R^d)\times[0, T]\times L^2(\R^d)\times L^2(\R^d)&\lra[0, +\infty[\\
(\bar m^X, \bar m^Y, s, a, b)&\longmapsto\ell(\bar m^X, \bar m^Y, s, a, b),
\end{align*}
and suppose that it is bounded, strongly continuous and uniformly strongly continuous w.r.t. $(\bar m^X, \bar m^Y, t)$ uniformly w.r.t. $(a, b)$, that is, there exists a modulus of continuity $\omega_{\ell}$ such that, for any fixed $a, b$ it is
\begin{multline*}
|\ell(\bar m_1^X, \bar m_1^Y, s_1, a, b)-\ell(\bar m_2^X,\bar m_2^Y, s_2, a, b)|\\
\leq\omega_{\ell}\left(\|\bar m_1^X-\bar m_1^Y\|_{H^1(\R^d)}+\|\bar m_2^X-\bar m_2^Y\|_{H^1(\R^d)}+|s_1-s_2|\right)
\end{multline*}
for all $(\bar m_1^X, \bar m_1^Y, s_1), (\bar m_2^X, \bar m_2^Y, s_2)\in H^1(\R^d)\times H^1(\R^d)\times[0, T]$.
\item Final cost
\begin{align*}
\psi:H^1(\R^d)\times H^1(\R^d)&\lra[0, +\infty[\\
(\bar m^X, \bar m^Y)&\longmapsto\psi(\bar m^X, \bar m^Y),
\end{align*}
and suppose that it is bounded and uniformly strongly continuous w.r.t. $(\bar m^X, \bar m^Y)$. We denote by $\omega_\psi$ its modulus of continuity.
\end{itemize}
The corresponding cost functional $J$ is given by
\begin{multline*}
J(\bar m^X, \bar m^Y, t, \a, \beta)
=\int_t^T\ell\left(m^X\left(\cdot, s;\a, t, \bar m^X\right), m^Y\left(\cdot, s; \beta, t, \bar m^Y\right), s, \a(\cdot, s), \beta(\cdot, s)\right)ds\\
+\psi\left(m^X\left(\cdot, T; \a, t, \bar m^X\right), m^Y\left(\cdot, T; \beta, t, \bar m^Y\right)\right),
\end{multline*}
for all $(\bar m^X, \bar m^Y, t, \a, \b)\in H^1(\R^d)\times H^1(\R^d)\times[0, T]\times\cA(t)\times\cB(t)$.
\par\smallskip
In our model, the mass $m^X$ aims to minimize the cost $J$ while the mass $m^Y$ aims to maximize it. Therefore, let us first introduce the non-anticipating strategies for the mass $m^X$
$$
\Gamma(t)=\{\gamma:\cB(t)\lra\cA(t): \beta\longmapsto\gamma[\beta]\ \text{non-anticipating}\}.
$$
We point out that ``non-anticipating'', in this context, means that for all $\tau\in[t, T]$,
$$
\beta_1(\cdot, s)=\beta_2(\cdot, s)\ \text{in $\tilde\cB$ a.e. $s$}\in [t, \tau]\ \Rightarrow \gamma[\beta_1](\cdot, s)=\gamma[\beta_2](\cdot, s)\ \text{in $\tilde\cA$}\ \text{a.e.}\ s\in[t, \tau].
$$
Next, we define the lower value function associated to the cost functional $J$ as follows:
\begin{equation}
\label{lowervaluefun}
\underbar{V}(\bar m^X, \bar m^Y, t)=\inf_{\gamma\in\Gamma(t)}\sup_{\beta\in\cB(t)}J(\bar m^X, \bar m^Y, t, \gamma[\beta], \beta).
\end{equation}
In the same way, we can define the non-anticipating strategies for the mass $m^Y$:
$$
\Delta(t)=\{\delta:\cA(t)\lra\cB(t): \alpha\longmapsto\delta[\alpha]\ \text{non-anticipating}\},
$$
and consider the upper value function
$$
\overline{\text V}(\bar m^X, \bar m^Y, t)=\sup_{\delta\in\Delta(t)}\inf_{\alpha\in\cA(t)}J(\bar m^X, \bar m^Y, t, \alpha, \delta[\alpha]).
$$
As mentioned in the Introduction, $\underbar{V}$ and $\overline{\text V}$ will be the unique continuous viscosity solutions of the upper Isaacs problem and of the lower Isaacs problem, respectively. In the following, we will focus on $\underbar V$ and the upper problem, as the lower problem is similar and symmetric.

\subsection{Dynamic Programming Principle and regularity of the value function}
In this section, we deduce first the dynamic programming principle and then a regularity result for the value function.
\begin{proposition}
\label{dynprog}
Suppose that the assumptions on $\ell$, $\psi$ and \eqref{vectorfields} hold. Then the lower value function $\underbar V$, defined in \eqref{lowervaluefun}, satisfies the Dynamic Programming Principle. More precisely, for all $(\bar m^X, \bar m^Y, t)\in H^1(\mathbb R^d)\times H^1(\mathbb R^d)\times[0, T]$ and for all $\tau\in[t, T]$,
\begin{multline}
\label{dpp}
\underbar V(\bar m^X, \bar m^Y, t)\\
=\inf_{\gamma\in\Gamma(t)}\sup_{\beta\in\cB(t)}\Bigg(\int_t^{\tau}\ell\left(m^X\left(\cdot, s; \gamma[\beta], t, \bar m^X\right), m^Y\left(\cdot, s; \beta, t, \bar m^Y\right), s, \gamma[\beta](\cdot, s), \beta(\cdot, s)\right)ds\\+\underbar V(m^X\left(\cdot, \tau; \gamma[\beta], t, \bar m^X\right), m^Y(\cdot, \tau; \beta, t, \bar m^Y), \tau)\Bigg).
\end{multline}
\end{proposition}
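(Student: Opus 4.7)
Denote the right-hand side of \eqref{dpp} by $W(\bar m^X, \bar m^Y, t)$. The plan is to establish both $\underbar V \le W$ and $\underbar V \ge W$, following the classical Elliott--Kalton scheme adapted to this infinite-dimensional setting. The key ingredient is the cost-additivity along trajectories: by the uniqueness of solutions to \eqref{contequationformassX}--\eqref{contequationformassY} (Remark \ref{osscontinuita}) and the semigroup property of the flow $\Phi$, for every $\gamma \in \Gamma(t)$ and $\beta \in \cB(t)$,
\begin{equation*}
J(\bar m^X, \bar m^Y, t, \gamma[\beta], \beta) = \int_t^{\tau} \ell(m^X(s), m^Y(s), s, \gamma[\beta](s), \beta(s))\, ds + J\bigl(m^X(\tau), m^Y(\tau), \tau, \gamma[\beta]|_{[\tau, T]}, \beta|_{[\tau, T]}\bigr),
\end{equation*}
where $m^X(\tau) = m^X(\cdot, \tau; \gamma[\beta], t, \bar m^X)$ and $m^Y(\tau) = m^Y(\cdot, \tau; \beta, t, \bar m^Y)$.

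For the inequality $\underbar V \ge W$, I fix $\gamma \in \Gamma(t)$ arbitrarily. Given $\beta_0 \in \cB(t)$ and $\eta \in \cB(\tau)$, define the concatenation $\beta_0 \star \eta \in \cB(t)$ equal to $\beta_0$ on $[t, \tau]$ and to $\eta$ on $[\tau, T]$; this still lies in $\cB(t)$ since the pointwise bounds in $\tilde\cB$ and the $L^2$-in-time integrability are preserved. Non-anticipation of $\gamma$ forces $\gamma[\beta_0 \star \eta]|_{[t, \tau]}$ to depend only on $\beta_0|_{[t, \tau]}$, so the intermediate states $m^X(\tau), m^Y(\tau)$ are fixed functions of $\beta_0$; moreover the map $\eta \longmapsto \gamma[\beta_0 \star \eta]|_{[\tau, T]}$ is itself non-anticipating and defines an element $\gamma' \in \Gamma(\tau)$. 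Applying the decomposition and taking $\sup_\eta$ yields
\begin{equation*}
\sup_{\beta \in \cB(t)} J(\gamma[\beta], \beta) \ge \int_t^{\tau} \ell\, ds + \underbar V(m^X(\tau), m^Y(\tau), \tau),
\end{equation*}
valid for every $\beta_0$. Taking $\sup_{\beta_0}$ on the right and then $\inf_\gamma$ on both sides delivers $\underbar V \ge W$.

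For the inequality $\underbar V \le W$, fix $\epsilon > 0$ and, by the axiom of choice, select, for each $(n^X, n^Y) \in \cM(\tau)$, a strategy $\gamma^{(n^X, n^Y)} \in \Gamma(\tau)$ with $\sup_\eta J(n^X, n^Y, \tau, \gamma^{(n^X, n^Y)}[\eta], \eta) \le \underbar V(n^X, n^Y, \tau) + \epsilon$. Given any $\gamma_0 \in \Gamma(t)$, define $\gamma \in \Gamma(t)$ by setting $\gamma[\beta] = \gamma_0[\beta]$ on $[t, \tau]$ and $\gamma[\beta] = \gamma^{(m^X(\tau), m^Y(\tau))}[\beta|_{[\tau, T]}]$ on $[\tau, T]$, where $m^X(\tau), m^Y(\tau)$ are the states generated by $\gamma_0[\beta]|_{[t, \tau]}$ and $\beta|_{[t, \tau]}$. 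A direct check verifies non-anticipation of $\gamma$: if $\beta_1 = \beta_2$ on $[t, s]$ with $s \ge \tau$, then the states at $\tau$ coincide (non-anticipation of $\gamma_0$ plus causality of the transport PDE), hence the same tail strategy $\gamma^{(n^X, n^Y)}$ is applied to restrictions agreeing on $[\tau, s]$, and its own non-anticipation concludes. The decomposition yields $J(\gamma[\beta], \beta) \le \int_t^{\tau} \ell\, ds + \underbar V(m^X(\tau), m^Y(\tau), \tau) + \epsilon$; taking $\sup_\beta$ and $\inf_{\gamma_0}$ gives $\underbar V \le W + \epsilon$, and sending $\epsilon \to 0$ completes the proof.

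The main technical point is the non-anticipation of the concatenated strategy in the $\le$ direction, which rests on the observation that the intermediate state $(m^X(\tau), m^Y(\tau))$ is entirely determined by the restrictions of the controls to $[t, \tau]$. Because of this, the selection $(n^X, n^Y) \longmapsto \gamma^{(n^X, n^Y)}$ needs only be a pointwise function-valued choice and \emph{not} a measurable one, which neatly bypasses what would otherwise be a delicate measurable selection argument in the infinite-dimensional state space $\cM(\tau)$. The remaining pieces of the argument are routine once the semigroup/uniqueness machinery of Section \ref{continuityeq1} is in place.
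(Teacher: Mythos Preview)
Your proof is correct and follows the same classical Elliott--Kalton scheme as the paper. The paper likewise splits into the two inequalities, proves $\underbar V\le W$ by the identical concatenation-of-strategies argument (selecting, for each intermediate state, an $\varepsilon$-optimal strategy in $\Gamma(\tau)$ and gluing it to a first-stage strategy on $[t,\tau]$), and for $\underbar V\ge W$ refers the reader to \cite{BCM} with the remark that this direction is ``more involved''; your sketch of that direction via the tail map $\eta\mapsto\gamma[\beta_0\star\eta]|_{[\tau,T]}\in\Gamma(\tau)$ is exactly the expected argument and fills in what the paper leaves as a reference.
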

\begin{proof}
For all $(p, q, \tau)\in H^1(\mathbb R^d)\times H^1(\mathbb R^d)\times[0, T]$ and for all $\varepsilon>0$ we choose $\gamma_{(p, q, \tau)}\in\Gamma(\tau)$ such that
$$
\underbar V(p, q, \tau)\geq \sup_{\beta\in \cB(\tau)}J(p, q, \tau, \gamma_{(p, q, \tau)}[\beta], \beta)-\varepsilon.
$$
Let $t\in[0,\tau]$. For simplicity, we denote by $w(\bar m^X, \bar m^Y, t)$ the right-hand side of \eqref{dpp}. In order to prove the \eqref{dpp},  we show the following:
\begin{itemize}
\item[$(i)$] $\underbar V(\bar m^X, \bar m^Y, t)\leq w(\bar m^X, \bar m^Y, t)$;
\item[$(ii)$] $\underbar V(\bar m^X, \bar m^Y, t)\geq w(\bar m^X, \bar m^Y, t)$.
\end{itemize}
As usual, the proof of $(ii)$ is more involved than the one of $(i)$ (see for example \cite{BCD} for the finite dimensional case). Here, we prove $(i)$ for the present case with two masses. For $(ii)$, we refer to \cite{BCM}, with appropriate modifications for the two masses scenario.
\par\smallskip
Let us consider $\bar\gamma\in\Gamma(t)$  such that
\begin{multline*}
w(\bar m^X, \bar m^Y, t)\geq\sup_{\beta\in\cB(t)}\Bigg(\int_t^{\tau}\ell(m^X(\cdot, s;\bar\gamma[ \beta], t, \bar m^X), m^Y(\cdot, s; \beta, t, \bar m^Y), s, \bar\gamma[\beta](s), \beta(\cdot, s))ds\\+\underbar V(m^X(\cdot, s; \bar\gamma[\beta], \tau, \bar m), m^Y(\cdot, \tau; \beta, t, \bar m), \tau)\Bigg)-\varepsilon.
\end{multline*}
Taken $\beta\in\cB(t)$, and still denoting by $\beta$ its restriction to $[\tau, T]$, we have $\beta\in\cB(\tau)$. We define $\tilde\gamma\in\Gamma(t)$ as 
$$
\tilde\gamma[\beta](s)=\begin{cases}\bar\gamma[\beta](s),&s\in[t, \tau]\\ \gamma_{(m^X(\cdot, \tau; \bar \gamma[\beta], t, \bar m^X), m^Y(\cdot, \tau; \beta, t, \bar m^Y), \tau)}[\beta](s),&s\in[\tau, T]\end{cases}
$$
where $m^X(\cdot, \tau; \bar \gamma[\beta], t, \bar m^X)$ is the density at time $\tau$ of the solution to \eqref{contequationformassX} with $\a=\bar\gamma[\beta]$. The strategy $\tilde\gamma$ is well-defined and non-anticipating, i.e., it belongs to $\Gamma(t)$. So we have
\begin{multline*}
w(\bar m^X, \bar m^Y, t)\geq\sup_{\beta\in\cB(t)}\Bigg(\int_t^{\tau}\ell(m^X(\cdot, s; \beta, t, \bar m^X), m^Y(\cdot, s; \beta, t, \bar m^Y), s, \bar\gamma[\beta](s), \beta(\cdot, s))ds\\+\underbar V(m^X(\cdot, s; \beta, \tau, \bar m), m^Y(\cdot, \tau; \beta, t, \bar m), \tau)\Bigg)-\varepsilon\\
\geq\sup_{\beta\in\cB(t)}\Bigg(\int_t^{\tau}\ell(m^X(\cdot, s; \beta, t, \bar m^X), m^Y(\cdot, s; \beta, t, \bar m^Y), s, \bar\gamma[\beta](s), \beta(\cdot, s))\\+J(m^X(\cdot, s; \beta, t, \bar m^X), m^Y(\cdot, \tau; \beta, t, \bar m^Y), \tau, \gamma_{(m^X(\cdot, s; \beta, t, \bar m^X), m^Y(\cdot, \tau; \beta, t, \bar m^Y), \tau)}[\beta], \beta)\Bigg)-2\varepsilon\\
=\sup_{\beta\in\cB(t)}J(\bar m^X, \bar m^Y, t, \tilde\gamma[\beta], \beta)-2\varepsilon\geq\underbar V(\bar m^X, \bar m^Y, t)-2\varepsilon,
\end{multline*}
 and this concludes the proof of point $(i)$. 
\end{proof}
\begin{proposition}
Suppose that the assumptions on $\ell$, $\psi$ and \eqref{vectorfields} hold. Then the lower value function $\underbar V$ is bounded and uniformly continuous.
\end{proposition}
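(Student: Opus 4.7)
The plan is to establish boundedness immediately from the boundedness of $\ell$ and $\psi$, then obtain uniform continuity by splitting the joint displacement into a ``spatial'' modulus at fixed initial time and a ``temporal'' modulus at fixed initial data, and combining them by the triangle inequality. Boundedness is direct: if $\|\ell\|_\infty,\|\psi\|_\infty\le C$, then every admissible realization of the cost satisfies $0\le J\le C(T+1)$, and the same bound transfers to the inf--sup defining $\underbar V$.

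For the spatial part I would fix a common $t\in[0,T]$ and two pairs of initial data in $H^1(\R^d)\cap W^{1,\infty}(\R^d)$. Given $\varepsilon>0$, an $\varepsilon$-optimal $\gamma\in\Gamma(t)$ for the first value is automatically an admissible competitor for the second, and for every $\beta\in\cB(t)$ the continuous-dependence estimate \eqref{dipendenzacont1} applied separately to the $X$- and $Y$-flows yields
$$\|m^i(\cdot,s;\cdot,t,\bar m_1^i)-m^i(\cdot,s;\cdot,t,\bar m_2^i)\|_{H^1(\R^d)}\le\tilde L\|\bar m_1^i-\bar m_2^i\|_{H^1(\R^d)},\quad i\in\{X,Y\},\ s\in[t,T].$$
Combining this with the uniform moduli $\omega_\ell$ and $\omega_\psi$, integrating in time and then taking inf--sup on both sides, I obtain a modulus in $\|\bar m_1^X-\bar m_2^X\|_{H^1}+\|\bar m_1^Y-\bar m_2^Y\|_{H^1}$ which is independent of $t$.

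For the temporal part I would invoke the dynamic programming principle of Proposition \ref{dynprog} with $\tau=t_2$ at the earlier time $t_1<t_2$. The integral of $\ell$ on $[t_1,t_2]$ contributes at most $C(t_2-t_1)$, and the remaining term compares $\underbar V(\cdot,\cdot,t_2)$ evaluated at the original data with the same quantity evaluated at its flown images at time $t_2$. By Proposition \ref{propositionestimates}(ii), these flown images are $H^1$-close to the original data uniformly in the controls, with a modulus depending only on $t_2-t_1$ and on the $H^1\cap W^{1,\infty}$ norms of the initial data; composing this closeness with the spatial modulus obtained above closes the estimate. Summing the spatial and temporal contributions produces a single joint modulus in $\|\bar m_1^X-\bar m_2^X\|_{H^1}+\|\bar m_1^Y-\bar m_2^Y\|_{H^1}+|t_1-t_2|$.

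The main obstacle I anticipate is not analytic but book-keeping: the spaces $\cA(t),\cB(t),\Gamma(t)$ depend on the initial time, so when invoking the DPP in the temporal step I must verify that an $\varepsilon$-optimal strategy in $\Gamma(t_2)$ can be prolonged to a non-anticipating strategy in $\Gamma(t_1)$ and, conversely, that the restriction to $[t_2,T]$ of any strategy in $\Gamma(t_1)$ lies in $\Gamma(t_2)$. This follows from the fact that the pointwise admissibility sets $\tilde\cA,\tilde\cB$ are independent of the initial time, so prolonging any strategy by a fixed element of $\tilde\cA$ on $[t_1,t_2]$ yields a legitimate non-anticipating strategy; once this is set up, the remainder of the argument is a direct assembly of \eqref{dipendenzacont1}, Proposition \ref{propositionestimates}(ii), the moduli $\omega_\ell,\omega_\psi$, and Proposition \ref{dynprog}.
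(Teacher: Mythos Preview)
Your proposal is correct but takes a genuinely different route from the paper for the time variable. The paper does \emph{not} invoke the dynamic programming principle: it treats the full displacement $(\bar m_1^X,\bar m_1^Y,t_1)\to(\bar m_2^X,\bar m_2^Y,t_2)$ at once by choosing an $\varepsilon$-optimal $\gamma_2\in\Gamma(t_2)$, explicitly prolonging it to a strategy $\gamma_2^1\in\Gamma(t_1)$ (by a fixed constant control on the extra interval), making a symmetric construction on the control side, and then comparing the two $J$-values directly via the continuous-dependence estimate \eqref{dipendenzacont1} together with $\omega_\ell,\omega_\psi$. Your approach instead decouples: the spatial step (fixed $t$) is essentially the paper's argument in the easy case $t_1=t_2$, while the temporal step is obtained from Proposition~\ref{dynprog} and the flow continuity of Proposition~\ref{propositionestimates}. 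Both are valid; the paper's route is self-contained (usable even if one wanted to prove regularity before the DPP), whereas yours is more modular and avoids building the cross-time strategy by hand.

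One remark on your last paragraph: once you use the DPP at $t_1$ with $\tau=t_2$, the book-keeping worry you raise largely disappears. The identity in Proposition~\ref{dynprog} involves only $\Gamma(t_1)$ and $\cB(t_1)$, and its right-hand side already returns $\underbar V(\cdot,\cdot,t_2)$; you never need to lift an $\varepsilon$-optimal $\gamma\in\Gamma(t_2)$ back to $\Gamma(t_1)$. That prolongation is exactly what the paper does because it bypasses the DPP, but in your scheme it has been absorbed into Proposition~\ref{dynprog} itself. A more relevant caveat is that the flow modulus from Proposition~\ref{propositionestimates}(ii) depends on the norms of the initial datum, so your temporal modulus is a priori only uniform on bounded sets; you can remove this by using \eqref{dipendenzacont1} with $\bar m^1=\bar m^2$ (which gives Lipschitz dependence in the initial time) in place of part~(ii), matching the paper's final estimate.
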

\begin{proof}
Since $\ell$ and $\psi$ are bounded function, we have that  for all $(\bar m^X, \bar m^Y, t)$ 
$$
\underbar V(\bar m^X, \bar m^Y, t)\leq G_1T+G_2,
$$
where $G_1>0$ and $G_2>0$ are the bounds for $\ell$ and $\psi$ respectively. So $\underbar V(\bar m^X, \bar m^Y, t)$ is bounded.
\par
Consider $(m^X_1, m^Y_1, t_1), (m^X_2, m^Y_2, t_2)\in  H^1(\mathbb R^d)\times H^1(\mathbb R^d)\times[0, T]$, and  $\varepsilon>0$. Let $\gamma_2\in\Gamma(t_2)$ be such that
$$
\underbar V(\bar m^X_2, \bar m^Y_2, t_2)\geq\sup_{\beta\in\cB(t_2)}J(\bar m^X_2, \bar m^Y_2, t_2, \gamma_2[\beta], \beta)-\varepsilon.
$$
We define $\gamma_2^1\in\Gamma(t_1)$ (and hence applying to controls in $\cB(t_1)$, which are defined on $[t_1,T]$) as
$$
\gamma_2^1[\beta](s)=\begin{cases}\gamma_2[\tilde \beta](s)&\text{for any }s\ \text{if }t_2\leq t_1\\
\bar a&\text{for }t_1\leq s\leq t_2\ \text{if }t_1\leq t_2\\
\gamma_2[\beta_{|[t_2, T]}](s)&\text{for }s\geq t_2\ \text{if }t_1\leq t_2
\end{cases},
$$
where $\bar a\in\tilde\cA$ is any a priori fixed constant (in time) control for the mass $X$ and, if $t_2\leq t_1$,
$$
\tilde \beta(\cdot, s)=\begin{cases}
\tilde b&\text{if }t_2\leq s\leq t_1\\
\beta(\cdot, s)&\text{if }s\geq t_1
\end{cases}.
$$
\noindent
where $\tilde b\in\tilde\cB$ is any a priori fixed constant (in time) control for the mass $Y$.
\par
Consider $\beta_1\in\cB(t_1)$ such that
$$
J(\bar m^X_1,\bar m^Y_1, t_1, \gamma_2^1[\beta_1], \beta_1)\geq\sup_{\beta\in\cB(t_1)}J(\bar m^X_1, \bar m^Y_1, t_1, \gamma_2^1[\beta], \beta)-\varepsilon.
$$
Moreover, let us take $\beta_1^2\in\cB(t_2)$ such that
$$
\beta_1^2(\cdot, s)=\beta_1(\cdot, s)\ \text{for }s\geq t_2+|t_1-t_2|.
$$
So we have
\begin{multline*}
\underbar V(\bar m^X_1,\bar m^Y_1, t_1)-\underbar V(\bar m^X_2, \bar m^Y_2, t_2)\\
\leq\sup_{\beta\in\cB(t_1)}J(\bar m^X_1, \bar m^Y_1, t_1, \gamma_2^1[\beta], \beta)-\sup_{\beta\in\cB(t_2)}J(\bar m^X_2, \bar m^Y_2, t_2, \gamma_2[\beta], \beta)+\varepsilon\\
\leq J(\bar m^X_1, \bar m^Y_1, t_1, \gamma_2^1[\beta_1], \beta_1)-J(\bar m^X_2, \bar m^Y_2, t_2, \gamma_2[\beta_1^2], \beta_1^2)+2\varepsilon.
\end{multline*}
Now, if 
\begin{equation}
\label{stimacosti}
|J(\bar m^X_1, \bar m^Y_1, t_1, \gamma_2^1[\beta_1], \beta_1)-J(\bar m^X_2, \bar m^Y_2, t_2, \gamma_2[\beta_1^2], \beta_1^2)|
\end{equation}
is infinitesimal as $|t_1-t_2|+\|\bar m^X_1(\cdot)-\bar m^X_2(\cdot)\|_{H^1(\mathbb R^d)}+\|\bar m^Y_1(\cdot)-\bar m^Y_2(\cdot)\|_{H^1(\mathbb R^d)}$, it goes to zero then we conclude. 
\par
We prove it in the case $t_1\leq t_2$. The case $t_2\leq t_1$ goes similarly. Let $\alpha \in\cA(t_1)$ and $\beta \in \cB(t_1)$, then \eqref{stimacosti} is equal to
\begin{multline*}
|J(\bar m^X_1, \bar m^Y_1, t_1, \gamma_2^1[\beta_1], \beta_1)-J(\bar m^X_2, \bar m^Y_2, t_2, \gamma_2[\beta_1^2], \beta_1^2)|=\\
\Bigg|\int_{t_1}^T\ell( m^X(\cdot, s; \alpha, t_1, \bar m^X_1), m^Y(\cdot, s; \beta, t_1, \bar m^Y_1), s, \gamma_2^1[\beta_1], \beta_1(\cdot, s))ds\\
+\psi(m^X(\cdot, T; \alpha, t_1, \bar m^X_1), m^Y(\cdot, T; \beta, t_1, \bar m^Y_1))\\
-\int_{t_2}^T\ell(m^X(\cdot, s; \alpha, t_2, \bar m^X_2), m^Y(\cdot, s; \beta, t_2, \bar m^Y_2), s, \gamma_2[\beta_1^2](s), \beta_1^2(\cdot, s))ds\\
-\psi(m^X(\cdot, T; \alpha, t_2, \bar m^X_2), m^Y(\cdot, T; \beta, t_2, \bar m^Y_2))\Bigg|\\
=\Bigg|\int_{t_1}^{t_2}\ell(m^X(\cdot, s; \alpha, t_1, \bar m^X_1), m^Y(\cdot, s; \beta, t_1, \bar m^Y_1), s, \bar a, \beta_1(\cdot, s))ds\\
+\int_{t_2}^T\ell(m^X(\cdot, s; \alpha, t_1, \bar m^X_1), m^Y(\cdot, s; \beta, t_1, m^Y_1), s, \gamma_2[\beta_{|[t_2, T]}](s), \beta_1^2(\cdot, s))ds\\
-\int_{t_2}^T\ell(m^X(\cdot, s; \alpha, t_2, \bar m^X_2), m^Y(\cdot, s; \beta, t_2, \bar m^Y_2), s, \gamma_2[\beta_{|[t_2, T]}](s), \beta_1^2(\cdot, s))ds\\
+\psi(m^X(\cdot, T; \alpha, t_1, \bar m^X_1), m^Y(\cdot, T; \beta, t_1, \bar m^Y_1)))-\psi(m^X(\cdot, T; \alpha, t_2, \bar m^X_2), m^Y(\cdot, T; \beta, t_2, \bar m^Y_2))\Bigg|.\\
\end{multline*}
Since $\ell$ and $\psi$ are uniformly strongly continuous, and using \eqref{dipendenzacont1} in Proposition \ref{propositionestimates}, we derive the following estimation 
\begin{multline*}
|J(\bar m^X_1, \bar m^Y_1, t_1, \gamma_2^1[\beta_1], \beta_1)-J(\bar m^X_2, \bar m^Y_2, t_2, \gamma_2[\beta_1^2], \beta_1^2)|\\
\leq G_1|t_1-t_2|+(\omega_{\psi}+T\omega_{\ell})\left(\tilde L\left(\|\bar m^X_1(\cdot)-\bar m^X_2(\cdot)\|_{H^1(\R^d)}+\|\bar m^Y_1(\cdot)-\bar m^Y_2(\cdot)\|_{H^1(\R^d)}\right)\right),
\end{multline*}
where $\tilde L$ is a suitable constant (see \eqref{dipendenzacont1} and also the comments after \eqref{controlsX}-\eqref{controlsY}).
Therefore, as $|t_1-t_2|+\|\bar m^X_1(\cdot)-\bar m^X_2(\cdot)\|_{H^1(\mathbb R^d)}+\|\bar m^Y_1(\cdot)-\bar m^Y_2(\cdot)\|_{H^1(\mathbb R^d)}$ goes to zero, \eqref{stimacosti} is infinitesimal. This concludes the proof.
\end{proof}
\subsection{The Hamilton-Jacobi-Isaacs equation for $\underbar V$}
\label{HJIforV}
In this section, we derive the corresponding Hamilton-Jacobi-Isaacs equation and we are going to consider it in a suitable set $\tilde\cM$ for the variables $(m^X, m^Y, t)$, which is compact in $L^2(\R^d)\times L^2(\R^d)\times[0, T]$ and invariant for the controlled evolutions $m^X(\cdot, s; \alpha, t, \bar m^X)$ and $m^Y(\cdot, s; \beta, t, \bar m^Y)$. Such a set is detected by an analogous construction as the one in \S\ref{sec:invariantset}, one per each population $X$ and $Y$. Indeed, there will be a set $\cM^X$ for the evolution $m^X(\cdot, s; \alpha, t, \bar m^X)$ and a set $\cM^Y$ for the evolution $m^Y(\cdot, s; \beta, t, \bar m^Y)$, and both of them are constructed with suitable constants and sets for each population. Therefore, the domain we are looking for is, for suitable compact sets $K^X(t)\subset H^1(\Omega_1^X(T))$ and $K^Y(t)\subset H^1(\Omega_1^Y(T))$ as in \S\ref{sec:invariantset},
\begin{multline}
\label{insiemeM}
\tilde\cM=\bigcup_{t\in[0, T]}\left(\cM^X(t)\times\cM^Y(t)\times\{t\}\right)\\
=\left\{(\bar m^X, \bar m^X, t)\in\left(W^{1, \infty}(\Omega_1^X(T))\cap K^X(t)\right)\times\left(W^{1, \infty}(\Omega_1^Y(T))\cap K^Y(t)\right)\times[0, T]:\right.\\
\left.\supp\bar m^X\subset\Omega_1^X(t),\ \supp\bar m^Y\subset\Omega_1^Y(t),\right.\\
\left. \|m^X(\cdot, s; \a, t, \bar m^X)\|_{W^{1, \infty}(\Omega_1^X(T))}\leq B^Xe^{M^XT}, \ \|m^Y(\cdot, s; \b, t, \bar m^Y)\|_{W^{1, \infty}(\Omega_1^Y(T))}\leq B^Ye^{M^YT}\right.\\
\left.\text{for all }s\in[t,T],\ \a\in\cA(t)\ \text{ and }\ \beta\in\cB(t)\right\}
\end{multline}
and it satisfies the above required properties. We also notice that, in general, the constants $B^X, B^Y$ as well as $M^X, M^Y$ may be different. 
\par\smallskip
We define the Hamiltonian function $H:\tilde\cM\times L^2(\R^d)\times L^2(\R^d)\lra\R$ as
$$
H(m^X, m^Y, t, p, q):=\min_{b\in\tilde\cB}\max_{a\in\tilde\cA}\left\{\langle p, \operatorname{div}(am^X)\rangle_{L^2(\R^d)}+\langle q, \operatorname{div}(bm^Y)\rangle_{L^2(\R^d)}-\ell(m^X, m^Y, t, a, b)\right\}.
$$
In the following, we will consider the fields $\a(\cdot, s)$ and $\beta(\cdot, s)$ defined on the compact sets $\Omega_1^X(T)$ and $\Omega_1^Y(T)$ (see \S\ref{sec:invariantset}), i.e., $\a:\Omega_1^X(T)\times[t, T]\lra\R^d$ and $\beta:\Omega_1^Y(T)\times[t, T]\lra\R^d$, respectively. Additionally,  $D_X$ and $D_Y$ will denote the Fr\'echet differentials with respect to $m^X$ and $m^Y$, respectively.
\begin{definition}
A function $u\in C^0(\tilde\cM)$ is a viscosity subsolution of 
\begin{equation}
\label{hjbviscosity}
-u_t(m^X, m^Y, t)+H(m^X, m^Y, t, D_Xu, D_Yu)=0\quad\text{in }\tilde\cM
\end{equation}
if, for any $\phi\in C^1(L^2(\R^d)\times L^2(\R^d)\times[0, T])$,
$$
-\phi_t(\bar m^X, \bar m^Y, \bar t)+H(\bar m^X, \bar m^Y, \bar t, D_X\phi(\bar m^X, \bar m^Y, \bar t), D_Y\phi(\bar m^X, \bar m^Y, \bar t))\leq0
$$
at any local maximum point $(\bar m^X, \bar m^Y, \bar t)\in\tilde\cM$ of $u-\phi$. Similarly, $u\in C^0(\tilde\cM)$ is a viscosity supersolution of \eqref{hjbviscosity} if, for any $\phi\in C^1(L^2(\R^d)\times L^2(\R^d)\times[0, T])$,
$$
-\phi_t(\tilde m^X, \tilde m^Y, \tilde t)+H(\tilde m^X, \tilde m^Y, \tilde t, D_X\phi(\tilde m^X, \tilde m^Y, \tilde t), D_Y\phi(\tilde m^X, \tilde m^Y, \tilde t))\geq0
$$
at any local minimum point $(\tilde m^X, \tilde m^Y, \tilde t)\in\tilde \cM$ of $u-\phi$. Finally, $u$ is a viscosity solution of \eqref{hjbviscosity} if it is simultaneously a viscosity sub- and supersolution. 
\end{definition}
We observe that the above local maximum/minimum point is w.r.t. $\tilde\cM$ and not necessarily w.r.t. $L^2(\R^d)\times L^2(\R^d)\times[0, T]$, where the test function is defined. This is not a problem. As discussed in \S\ref{sec:invariantset}, $\tilde \cM$ is invariant for our evolutions and moreover, in $\tilde \cM$ the convergences of $m^X$ and $m^Y$ in $H^1(\R^d)$ and in $L^2(\R^d)$ are equivalent.
\begin{theorem}
\label{esistenza}
Under the previous assumptions, the lower value function $\underbar V$ is a viscosity solution of
\begin{equation}
\label{HJI}
\begin{cases}
-V_t+H(m^X, m^Y, t, D_XV, D_YV)=0&\text{in }\tilde \cM\\
V(m^X, m^Y, T)=\psi(m^X, m^Y),&(m^X, m^Y)\in\cM^X(T)\times \cM^Y(T)\times\{T\}
\end{cases}.
\end{equation}
\end{theorem}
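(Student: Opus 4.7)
The plan is to derive the two viscosity inequalities from the Dynamic Programming Principle (Proposition \ref{dynprog}) combined with the chain rule along trajectories made available by Remark \ref{osscontinuita}: since $\partial_t m^i=-\div(\cdot\, m^i)$ holds in $L^2(\R^d\times[0,T])$, for any test function $\phi\in C^1(L^2(\R^d)\times L^2(\R^d)\times[0,T])$ the map $\tau\mapsto\phi(m^X(\tau),m^Y(\tau),\tau)$ is differentiable with derivative $\phi_t-\langle D_X\phi,\div(\alpha m^X)\rangle-\langle D_Y\phi,\div(\beta m^Y)\rangle$. The terminal condition is trivial, since at $t=T$ the integral in $J$ is empty and $\underbar V$ reduces to $\psi(\bar m^X,\bar m^Y)$. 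Throughout, the $L^2$-compactness of $\tilde\cA,\tilde\cB$ (Remark \ref{remark1}) will be used both to build non-anticipating strategies by measurable selection and to pass to the limit in time-averages of controls.

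For the subsolution inequality, fix a local maximum point $(\bar m^X,\bar m^Y,\bar t)\in\tilde\cM$ of $\underbar V-\phi$ with $\bar t<T$, normalized so that $(\underbar V-\phi)(\bar m^X,\bar m^Y,\bar t)=0$, and denote
$$
G(a,b)=\langle D_X\phi(\bar m^X,\bar m^Y,\bar t),\div(a\bar m^X)\rangle+\langle D_Y\phi(\bar m^X,\bar m^Y,\bar t),\div(b\bar m^Y)\rangle-\ell(\bar m^X,\bar m^Y,\bar t,a,b).
$$
The naive use of a constant-in-time strategy $\gamma\equiv a$ in the DPP would only yield $\phi_t\geq\sup_a\inf_b G$, strictly weaker than the required $\phi_t\geq\min_b\max_a G=H$. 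The fix is a \emph{chasing} strategy: for every $\varepsilon>0$, the compactness of $\tilde\cA$ and the continuity of $G(\cdot,b)$ produce, via a measurable selection argument, a Borel map $a^\varepsilon:\tilde\cB\to\tilde\cA$ with $G(a^\varepsilon(b),b)\geq\max_{a\in\tilde\cA}G(a,b)-\varepsilon$. Setting $\gamma^\varepsilon[\beta](\cdot,s):=a^\varepsilon(\beta(\cdot,s))$ defines a non-anticipating element of $\Gamma(\bar t)$. Substituting $\gamma^\varepsilon$ into the DPP on $[\bar t,\bar t+h]$, using $\underbar V\leq\phi$ with equality at $(\bar m^X,\bar m^Y,\bar t)$, applying the chain rule, dividing by $h$, and then letting first $h\to 0^+$ (using Proposition \ref{propositionestimates}(ii) and the uniform continuity of $\ell$) and then $\varepsilon\to 0^+$, yields $-\phi_t+H(\bar m^X,\bar m^Y,\bar t,D_X\phi,D_Y\phi)\leq 0$.

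For the supersolution inequality, fix a local minimum point $(\tilde m^X,\tilde m^Y,\tilde t)\in\tilde\cM$ of $\underbar V-\phi$ with $\tilde t<T$, and pick an arbitrary $b\in\tilde\cB$. Bounding the $\sup_\beta$ in the DPP from below by its value at the constant $\beta\equiv b$, and noting that $\{\gamma[\beta\equiv b]:\gamma\in\Gamma(\tilde t)\}=\cA(\tilde t)$, yields the one-sided inequality $\underbar V(\tilde m^X,\tilde m^Y,\tilde t)\geq\inf_{\alpha\in\cA(\tilde t)}\{\int_{\tilde t}^{\tilde t+h}\ell\,ds+\underbar V(\cdot,\tilde t+h)\}$. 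Pick $\alpha^h\in\cA(\tilde t)$ that is $h^2$-optimal; using $\underbar V\geq\phi$ and the chain rule,
$$
\frac{1}{h}\int_{\tilde t}^{\tilde t+h}\bigl[\phi_t+\ell-\langle D_X\phi,\div(\alpha^h(s)m^X(s))\rangle-\langle D_Y\phi,\div(b\,m^Y(s))\rangle\bigr]\,ds\leq h.
$$
By convexity of $\tilde\cA$ the time-average $\bar\alpha^h=\frac{1}{h}\int\alpha^h(s)\,ds$ lies in $\tilde\cA$, and $L^2$-compactness provides a subsequential limit $a^\ast\in\tilde\cA$; the divergence terms pass to the limit by linearity in $\alpha$. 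For the $\ell$-term, the empirical Young measures associated to $\alpha^h$ converge weakly to a probability measure $\mu$ on $\tilde\cA$ with barycenter $a^\ast$, and by Jensen-type domination $\int G(a,b)\,d\mu(a)\leq\max_a G(a,b)$. Sending $h\to 0^+$ gives $\phi_t\leq\max_a G(a,b)$, and taking the infimum over $b\in\tilde\cB$ delivers $-\phi_t+H\geq 0$.

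The main obstacle is the $\ell$-limit in the supersolution step: since $\ell$ is only continuous, and in particular need not be linear or convex in the control variables, one cannot identify $\lim_{h\to 0^+} h^{-1}\int\ell(\cdots,\alpha^h(s),b)\,ds$ with $\ell$ evaluated at any fixed $a^\ast\in\tilde\cA$ without an additional compactness structure. The $L^2$-compactness of $\tilde\cA$ from Remark \ref{remark1}, combined with a Young-measure (equivalently, weak-$\ast$) convergence argument, is what makes this step legitimate and is the reason for the careful construction of the invariant set $\tilde\cM$ in \eqref{insiemeM}. The subsolution direction, by contrast, only uses this same compactness to produce the measurable chasing selection $a^\varepsilon$ and is considerably more direct.
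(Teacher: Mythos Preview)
Your argument is correct, and the subsolution half matches the paper's approach: the paper packages precisely your ``chasing'' selection $b\mapsto a^\varepsilon(b)$ into Lemma~\ref{lemmaxstrategia}, which is invoked to obtain the subsolution inequality.

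For the supersolution, however, your Young--measure machinery is unnecessary, and the paper's route is considerably shorter. The key structural point you do not exploit is that $H=\min_b\max_a G(a,b)$: arguing by contradiction, if $-\phi_t+H=-\theta<0$ at the minimum point, one can pick a single $b^\ast\in\tilde\cB$ realizing the outer minimum, and then $G(a,b^\ast)\le\phi_t-\theta$ holds \emph{for every} $a\in\tilde\cA$. Consequently, in the DPP on $[\tilde t,\tau]$ with the constant $\beta\equiv b^\ast$, the integrand satisfies the desired inequality pointwise in $s$ no matter what $\gamma[b^\ast](s)$ is; integrating and using $\underbar V\ge\phi$ directly contradicts \eqref{dpp}. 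No near--optimal $\alpha^h$, no averaging, no occupation measure, and no convexity of $\tilde\cA$ are needed. Your approach instead fixes an arbitrary $b$, extracts a near--optimal $\alpha^h$, and must then pass to the limit through the (merely continuous) dependence of $\ell$ on the control---hence the Young--measure step. This works, but it trades a one--line algebraic observation for a compactness argument; the paper's proof also makes clearer why the supersolution direction is the easy one and the subsolution direction is the one genuinely requiring the measurable--selection lemma and Remark~\ref{remark1}.
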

\begin{proof}
We will prove only that $\underbar V$ is a supersolution of \eqref{HJI}. The proof that it is also a subsolution requires a suitable lemma, which is provided below as Lemma \ref{lemmaxstrategia} (see also Remark \ref{remark1}). This lemma is formulated for our infinite-dimensional case with two masses. The proof of the lemma can be easily derived from the corresponding result in \cite{BCM} for the case with one mass and a single player.
\par
Take $\phi\in C^1(L^2(\R^d)\times L^2(\R^d)\times[0, T])$. Let $(\bar m^X, \bar m^Y, \bar t)$ be a local minimum point for $\underbar V-\phi$, and $\underbar V(\bar m^X, \bar m^Y, \bar t)=\phi(\bar m^X, \bar m^Y, \bar t)$. By contradiction, we suppose that
$$
-\phi_t(\bar m^X, \bar m^Y, \bar t)+H(\bar m^X, \bar m^Y, \bar t, D_X\underbar V, D_Y\underbar V)=-\theta<0.
$$
By the definition of $H$, there exists $b^*\in\tilde\cB$ such that
\begin{multline*}
-\phi_t(\bar m^X, \bar m^Y, \bar t)+\langle D_X\phi(\bar m^X, \bar m^Y, \bar t), \div(a\bar m^X)\rangle_{L^2(\Omega_1^X(T))}\\
+\langle D_Y\phi(\bar m^X, \bar m^Y, \bar t), \div(b^*\bar m^Y)\rangle_{L^2(\Omega_1^Y(T))}-\ell(\bar m^X, \bar m^Y, \bar t, a, b^*)\leq -\theta
\end{multline*}
for all $a\in A$. For $\tau$ sufficiently close to $\bar t$ and any $\gamma\in\Gamma(\bar t)$, we have
\begin{multline*}
-\phi_t(m^X(\cdot, s; \gamma[b^*], \bar t, \bar m^X), m^Y(\cdot, s; b^*, \bar t, \bar m^Y), s)\\
+\langle D_X\phi(m^X(\cdot, s; \gamma[b^*], \bar t, \bar m^X), m^Y(\cdot, s; b^*, \bar t, \bar m^Y), s), \div(\gamma[b^*]m^X(\cdot, s; \gamma[b^*], \bar t, \bar m^X))\rangle_{L^2(\Omega_1^X(T))}\\
+\langle D_Y\phi(m^X(\cdot, s; \gamma[b^*], \bar t, \bar m^X), m^Y(\cdot, s; b^*, \bar t, \bar m^Y), s), \div (b^*m^Y(\cdot, s; b^*, \bar t, \bar m^Y))\rangle_{L^2(\Omega_1^Y(T))}\\
-\ell(m^X(\cdot, s; \gamma[b^*], \bar t, \bar m^X), m^Y(\cdot, s; b^*, \bar t, \bar m^Y), s, \gamma[b^*](s), b^*)\leq-\frac{\theta}{2}
\end{multline*}
for every $\bar t\leq s<\tau$. Integrating from $\bar t$ to $\tau$, and applying \eqref{contequationformassX} and \eqref{contequationformassY}, yields
\begin{multline*}
\phi(\bar m^X, \bar m^Y, \bar t)-\phi(m^X(\cdot, \tau; \gamma[b^*], \bar t, \bar m^X), m^Y(\cdot, \tau; b^*, \bar t, \bar m^Y), \tau)\\
-\int_{\bar t}^{\tau}\ell(m^X(\cdot, s; \gamma[b^*], \bar t, \bar m^X), m^Y(\cdot, s; b^*, \bar t, \bar m^Y), s, \gamma[b^*](s), b^*)ds\leq -\frac{\theta(\tau-\bar t)}{4},
\end{multline*}
for $\tau$ sufficiently close to $\bar t$. From
\begin{multline*}
\phi(\bar m^X, \bar m^Y, \bar t)-\phi(m^X(\cdot, \tau; \gamma[b^*], \bar t, \bar m^X), m^Y(\cdot, \tau; b^*, \bar t, \bar m^Y), \tau)\\
\geq \underbar V(\bar m^X, \bar m^Y, \bar t)-\underbar V(m^X(\cdot, \tau, \gamma[b^*], \bar t, \bar m^X), m^Y(\cdot, \tau; b ^*, \bar t, \bar m^Y), \tau),
\end{multline*}
one has
\begin{multline*}
\underbar V(m^X(\cdot, \tau, \gamma[b^*], \bar t, \bar m^X), m^Y(\cdot, \tau; b^*, \bar t, \bar m^Y), \tau)\\
+\int_{\bar t}^{\tau}\ell(m^X(\cdot, s; \gamma[b^*], \bar t, \bar m^X), m^Y(\cdot, s; b^*, \bar t, \bar m^Y), s, \gamma[b^*](s), b^*)ds\\
\geq \frac{\theta(\tau-\bar t)}{4}+\underbar V(\bar m^X, \bar m^Y, \bar t).
\end{multline*}
Therefore, we deduce that
\begin{multline*}
\inf_{\gamma\in\Gamma(\bar t)}\sup_{\beta\in\cB(\bar t)}\Bigg\{\int_{\bar t}^{\tau}\ell(m^X(\cdot, s; \gamma[\beta], \bar t, \bar m^X), m^Y(\cdot, s; \beta, \bar t, \bar m^Y), s, \gamma[\beta](s), \beta(\cdot, s))ds\\
+\underbar V(m^X(\cdot, \tau; \gamma[\beta], \bar t, \bar m^X), m^Y(\cdot, \tau; \beta, \bar t, \bar m^Y), \tau)\Bigg\}>\underbar V(\bar m^X, \bar m^Y, \bar t),
\end{multline*}
which contradicts \eqref{dpp}.
\end{proof}

\begin{lemma}
\label{lemmaxstrategia}
Assume the hypotheses of Theorem \ref{esistenza}. Let $(m^X, m^Y, t)\in\tilde\cM$ and $\phi\in C^1(L^2(\R^d)\times L^2(\R^d)\times[0, T])$ be such that
$$
-\phi_t(m^X, m^Y, t)+H(m^X, m^Y, t, D_X\phi(m^X, m^Y, t), D_Y\phi(m^X, m^Y, t))=\theta>0.
$$
Then there exists $\gamma^*\in\Gamma(t)$ such that for all $\b\in\cB(t)$ and $\tau>t$ sufficiently close to $t$,
\begin{multline*}
\int_t^{\tau}\Big\{\ell(m^X(\cdot, s; \gamma^*[\beta], t, \bar m^X), m^Y(\cdot, s; \beta, t, \bar m^Y), s, \gamma^*[\beta](s), \beta(\cdot, s))\\
-\langle D_X\phi(m^X(\cdot, s; \gamma^*[\beta], t, \bar m^X), m^Y(\cdot, s; \beta, t, \bar m^Y), s), \operatorname{div}(\gamma^*[\beta](s)m^X(\cdot, s;\gamma^*[\beta], t, \bar m^X)\rangle_{L^2(\Omega_1^X(T))}\\
-\langle D_Y\phi(m^X(\cdot, s; \gamma^*[\beta], t, \bar m^X), m^Y(\cdot, s; \beta, t, \bar m^Y), s), \operatorname{div}(\beta(\cdot, s)m^Y(\cdot, s; \beta, t, \bar m^Y))\rangle_{L^2(\Omega_1^Y(T))}\\
+\phi_t(m^X(\cdot, s; \gamma^*[\beta], t, \bar m^X), m^Y(\cdot, s; \beta, t, \bar m^Y), s)\Big\}ds\leq-\frac{\theta(\tau-t)}{4}.
\end{multline*}
\end{lemma}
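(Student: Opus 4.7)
The plan is to build $\gamma^*$ by an approximate measurable selection of maximizers in the inner ``$\max_a$'' of $H$, discretizing $b$ by exploiting the $L^2$-compactness of $\overline{\tilde\cB}$ given by Remark \ref{remark1}. Throughout, I would fix an auxiliary parameter $\eta\in(0,\theta/16)$ to be used as a tolerance.

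First, from $-\phi_t+H(m^X,m^Y,t,D_X\phi,D_Y\phi)=\theta$ and the formula $H=\min_b\max_a\{\cdot\}$, for each $b\in\tilde\cB$ I would select $a^*(b)\in\tilde\cA$ achieving the inner $\max_a$ up to $\eta$, namely
\[
\langle D_X\phi,\div(a^*(b)\,m^X)\rangle_{L^2}+\langle D_Y\phi,\div(b\,m^Y)\rangle_{L^2}-\ell(m^X,m^Y,t,a^*(b),b)\ge \phi_t+\theta-\eta,
\]
with $\phi_t, D_X\phi,D_Y\phi$ evaluated at the frozen $(m^X,m^Y,t)$. Then I would cover $\overline{\tilde\cB}$ by finitely many $L^2$-balls of small radius $\rho$ around centers $b_1,\ldots,b_N\in\tilde\cB$, set $a_k:=a^*(b_k)$, and take a Borel partition $\{V_1,\ldots,V_N\}$ of $\tilde\cB$ subordinate to the cover. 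For $\rho$ small, the continuity in $b\in\overline{\tilde\cB}$ of the maps $b\mapsto\langle D_Y\phi,\div(b\,m^Y)\rangle$ and $b\mapsto\ell(m^X,m^Y,t,a_k,b)$ allows me to replace $a^*(b)$ by $a_k$ uniformly for $b\in V_k$, losing only an extra $\eta$.

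Next, I would define the feedback strategy $\gamma^*[\beta](\cdot,s):=a_k$ whenever $\beta(\cdot,s)\in V_k$. Because the choice at time $s$ depends only on $\beta(\cdot,s)$, $\gamma^*$ is tautologically non-anticipating and lies in $\Gamma(t)$. Substituting $b=\beta(\cdot,s)$ and $a_k=\gamma^*[\beta](\cdot,s)$ into the discretized inequality and rearranging yields, at the frozen arguments, the pointwise bound
\[
\ell-\langle D_X\phi,\div(\gamma^*[\beta]\,m^X)\rangle-\langle D_Y\phi,\div(\beta\,m^Y)\rangle+\phi_t\le -\theta+2\eta.
\]
The last step is to pass from the frozen arguments to the evolving ones $(m^X(\cdot,s;\gamma^*[\beta],t,\bar m^X),m^Y(\cdot,s;\beta,t,\bar m^Y),s)$ appearing in the statement. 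This error is controlled uniformly in $\beta$ by combining: the $H^1$-Lipschitz dependence on initial data and time of Proposition \ref{propositionestimates}(ii) and \eqref{dipendenzacont1}; the uniform $W^{1,\infty}$-bound \eqref{stimamw} keeping the divergence pairings equicontinuous in the densities; the continuity moduli of $\phi_t,D_X\phi,D_Y\phi$ on $L^2\times L^2\times[0,T]$; and $\omega_\ell$. Shrinking $\tau-t$ absorbs the resulting error into a further $\eta$, so that the integrand is bounded above pointwise by $-\theta/2$, and the lemma follows by integration on $[t,\tau]$.

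The main obstacle is the second step: producing $a^*(b)$ in a tractable way and controlling the $b$-dependence of $\langle D_Y\phi,\div(b\,m^Y)\rangle$ with only the $L^2$-compactness of $\overline{\tilde\cB}$, when $\div b$ explicitly appears. The key point is that on $\tilde\cB$ the uniform $W^{2,\infty}$-bound together with a common bounded support (precisely the setting of Remark \ref{remark1}) upgrades $L^2$-convergence to $C^1$-convergence via Arzel\`a--Ascoli, so that $\div b$ converges uniformly and the divergence pairings (as well as $\ell$) are jointly continuous on $\overline{\tilde\cB}$. The rest of the argument follows the one-mass/one-player version in \cite{BCM}, the new ingredient being precisely that the opponent's constant controls $b$ now range over the infinite-dimensional set $\tilde\cB$, whose tractability rests on the $L^2$-compactness flagged in Remark \ref{remark1}.
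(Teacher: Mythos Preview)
Your approach is correct and coincides with the one the paper intends: the paper does not supply an independent proof of Lemma~\ref{lemmaxstrategia} but refers to the corresponding lemma in \cite{BCM}, and the argument there is precisely the finite covering/measurable selection scheme you describe, with Remark~\ref{remark1} supplying the crucial $L^2$-compactness of the opponent's instantaneous control set.

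One small correction: in your last paragraph you invoke a ``uniform $W^{2,\infty}$-bound'' on $\tilde\cB$ to upgrade $L^2$-convergence to $C^1$-convergence, but the definition \eqref{campispaziali} does \emph{not} bound $\|b\|_{W^{2,\infty}}$ uniformly; it only bounds $\|b\|_{L^\infty}$, $\|b\|_{H^1}$ and $\|\div b\|_{W^{1,\infty}}$. This does not break your argument, since what you actually need is that $b\mapsto\div(b\,m^Y)=(\div b)\,m^Y+b\cdot\nabla m^Y$ is continuous from $(\tilde\cB,L^2)$ into $L^2$. The uniform $W^{1,\infty}$-bound on $\div b$, together with the common compact support, gives equi-Lipschitz continuity of $\{\div b:b\in\tilde\cB\}$, so Arzel\`a--Ascoli applies to $\div b$ directly (any $L^2$-limit of $b_n$ forces the distributional limit of $\div b_n$, hence the uniform limit, to be $\div b$); and the term $b\cdot\nabla m^Y$ is handled by the $L^\infty$-bound on $b$ plus $m^Y\in W^{1,\infty}$. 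With this adjustment your proof goes through exactly as written.
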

In the next result, we show that the lower value function of the problem is the unique viscosity solution of the HJI problem \eqref{HJI}.
\begin{theorem}
\label{teoconfronto}
Assume the hypotheses of Theorem \ref{esistenza}. Let $u_1, u_2$ be bounded and uniformly continuous functions and, respectively, viscosity sub- and supersolution of
$$
-V_t+H(m^X, m^Y, t, D_XV, D_YV)=0\quad\text{in }\tilde\cM.
$$
If $u_1\leq u_2$ on $\cM^X(T)\times \cM^Y(T)\times\{T\}$, then $u_1\leq u_2$ in $\tilde\cM$. In particular, this guarantees the uniqueness of the bounded and uniformly continuous viscosity solution to \eqref{HJI}, which is $\underbar V$.
\end{theorem}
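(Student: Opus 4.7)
The plan is to adapt the classical doubling-of-variables argument for viscosity comparison to this infinite-dimensional setting, crucially exploiting the compactness of $\tilde\cM$ in $L^2(\R^d)\times L^2(\R^d)\times[0,T]$ and the equivalence \eqref{modcontm} of the $L^2$- and $H^1$-topologies on $\tilde\cM$. Arguing by contradiction, assume $\delta:=\sup_{\tilde\cM}(u_1-u_2)>0$. First, I would reduce to a \emph{strict} subsolution by replacing $u_1$ with $u_1^\eta:=u_1-\eta(T-t)$ for $\eta>0$ so small that $\sup_{\tilde\cM}(u_1^\eta-u_2)\geq\delta/2$; a direct check shows that $u_1^\eta$ is a viscosity subsolution of $-V_t+H=-\eta$, while $u_1^\eta(\cdot,T)\leq u_2(\cdot,T)$ still holds. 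Proving $u_1^\eta\leq u_2$ and sending $\eta\to 0^+$ yields the theorem.

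Next, I double the variables by setting, with $m_i=(m_i^X,m_i^Y)$,
\begin{equation*}
\Phi_\varepsilon(m_1,t_1,m_2,t_2)=u_1^\eta(m_1,t_1)-u_2(m_2,t_2)-\frac{1}{2\varepsilon}\Big(\|m_1^X-m_2^X\|_{L^2}^2+\|m_1^Y-m_2^Y\|_{L^2}^2+(t_1-t_2)^2\Big).
\end{equation*}
Compactness of $\tilde\cM\times\tilde\cM$ yields a maximum $M_\varepsilon\geq\delta/2$ at some $(\hat m_1,\hat t_1,\hat m_2,\hat t_2)$; boundedness of $u_1^\eta,u_2$ gives the standard estimate $\varepsilon^{-1}(\|\hat m_1^X-\hat m_2^X\|_{L^2}^2+\|\hat m_1^Y-\hat m_2^Y\|_{L^2}^2+(\hat t_1-\hat t_2)^2)\to 0$, and up to a subsequence $(\hat m_i,\hat t_i)\to(\hat m,\hat t)\in\tilde\cM$ in $L^2\times L^2\times[0,T]$, hence in $H^1\times H^1\times[0,T]$ by \eqref{modcontm}. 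The final condition together with uniform continuity of $u_2$ excludes $\hat t_1=T$: if it held, uniform continuity of $u_2$ would give $M_\varepsilon\leq u_1(\hat m_1,T)-u_2(\hat m_2,\hat t_2)\leq\omega_{u_2}(\|\hat m_1-\hat m_2\|_{L^2}+|T-\hat t_2|)\to 0$, contradicting $M_\varepsilon\geq\delta/2$. For $\varepsilon$ small the viscosity inequalities thus apply; reading off test functions from $\Phi_\varepsilon$, the time-derivative terms cancel (both equal $(\hat t_1-\hat t_2)/\varepsilon$), leaving
$$\eta\leq H(\hat m_2,\hat t_2,p,q)-H(\hat m_1,\hat t_1,p,q),\qquad p:=\frac{\hat m_1^X-\hat m_2^X}{\varepsilon},\ q:=\frac{\hat m_1^Y-\hat m_2^Y}{\varepsilon}.$$

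The closing step is to show the right-hand side vanishes as $\varepsilon\to 0$. Choosing a minimizer $b^*\in\tilde\cB$ for the outer min in $H(\hat m_1,\hat t_1,p,q)$ and a maximizer $a^*\in\tilde\cA$ for the inner max in $H(\hat m_2,\hat t_2,p,q)$ paired with $b^*$, the min-max book-keeping bounds the difference by a single $(a^*,b^*)$-expression. Its $p$-term, via integration by parts on the bounded set $\Omega_1^X(T)$ using $\int f\operatorname{div}(a^* f)\,dx=\tfrac12\int f^2\operatorname{div}a^*\,dx$, reduces to $\tfrac{1}{2\varepsilon}\int(\hat m_1^X-\hat m_2^X)^2\operatorname{div}a^*\,dx$, bounded in absolute value by $\tfrac{M^X}{2}\,\varepsilon^{-1}\|\hat m_1^X-\hat m_2^X\|_{L^2}^2\to 0$; the $q$-term is analogous, and the $\ell$-difference is controlled by $\omega_\ell(\|\hat m_1^X-\hat m_2^X\|_{H^1}+\|\hat m_1^Y-\hat m_2^Y\|_{H^1}+|\hat t_1-\hat t_2|)\to 0$. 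Sending $\varepsilon\to 0$ and then $\eta\to 0^+$ yields the contradiction $\eta\leq 0$ and hence the comparison; uniqueness of $\underbar V$ then follows from Theorem \ref{esistenza}. The main obstacle is exactly this Hamiltonian-difference estimate, because $H$ contains the unbounded first-order operators $\operatorname{div}(am^X),\operatorname{div}(bm^Y)$ tested against $p,q$ of size $O(1/\varepsilon)$; the cancellation succeeds only because the quadratic penalty forces $p,q$ to be parallel to $\hat m_1-\hat m_2$, so that integration by parts converts the dangerous $O(1/\varepsilon)$ terms into $\varepsilon^{-1}\|\hat m_1-\hat m_2\|_{L^2}^2=o(1)$, a cancellation that fundamentally requires both the uniform $W^{2,\infty}$-bound on $\operatorname{div}a,\operatorname{div}b$ encoded in \eqref{campispaziali} and the $L^2$-compactness of $\tilde\cM$ used to extract convergent subsequences.
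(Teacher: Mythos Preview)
Your proposal is correct and follows essentially the same route as the paper: contradiction, $\eta$-perturbation to a strict subsolution, $L^2$-penalty doubling of variables on the compact set $\tilde\cM$, exclusion of the terminal time, and the key Hamiltonian-difference estimate via the integration-by-parts identity $\int f\,\div(af)\,dx=\tfrac12\int f^2\,\div a\,dx$, which is exactly the content of the paper's Lemma~\ref{lemmaham}. The only cosmetic deviations are that the paper uses three independent penalty parameters and the modification $u_1-\eta t$ rather than $u_1-\eta(T-t)$, and it flags (and defers to \cite{BCM}) a small technical point at $t=0$ that you do not mention; none of this affects the substance of the argument.
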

For the proof, we need the following lemma.
\begin{lemma}
\label{lemmaham}
For all $\zeta, \xi>0$, $(m_1^X, m_1^Y, t_1), (m_2^X, m_2^Y, t_2)\in\tilde\cM$, $p=\frac{2(m_1^X-m_2^X)}{\zeta^2}$ and $q=\frac{2(m_1^Y-m_2^Y)}{\xi^2}$,  it holds
\begin{multline*}
\left|H\left(m_1^X, m_1^Y, t_1, \frac{2(m_1^X-m_2^X)}{\zeta^2}, \frac{2(m_1^Y-m_2^Y)}{\xi^2}\right)-H\left(m_2^X, m_2^Y, t_2, \frac{2(m_1^X-m_2^X)}{\zeta^2}, \frac{2(m_1^Y-m_2^Y)}{\xi^2}\right)\right|\\
\leq M\left(\frac{\|m_1^X-m_2^X\|^2_{L^2(\Omega_1^X(T))}}{\zeta^2}+\frac{\|m_1^Y-m_2^Y\|^2_{L^2(\Omega_1^Y(T))}}{\xi^2}\right)\\
+\omega_{\ell}\left(\|m_1^X-m_2^X\|_{H^1(\Omega_1^X(T))}+\|m_1^Y-m_2^Y\|_{H^1(\Omega_1^Y(T))}+|t_1-t_2|\right),
\end{multline*}
where $M=\max\{M^X, M^Y\}$ (see \eqref{insiemeM} and comments after \eqref{controlsX}-\eqref{controlsY}) and $\omega_{\ell}$ is the modulus of continuity of the running cost $\ell$.
\end{lemma}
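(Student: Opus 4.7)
The plan is to exploit the elementary inequality $|\inf_{b}\sup_{a}\Psi_1 - \inf_{b}\sup_{a}\Psi_2|\le\sup_{a,b}|\Psi_1-\Psi_2|$ so that, after unwinding the definition of $H$, it suffices to bound, uniformly in $a\in\tilde\cA$ and $b\in\tilde\cB$, the quantity
\begin{multline*}
\bigl\langle p, \div(a m_1^X)-\div(a m_2^X)\bigr\rangle_{L^2(\Omega_1^X(T))}+\bigl\langle q, \div(b m_1^Y)-\div(b m_2^Y)\bigr\rangle_{L^2(\Omega_1^Y(T))}\\
-\ell(m_1^X,m_1^Y,t_1,a,b)+\ell(m_2^X,m_2^Y,t_2,a,b),
\end{multline*}
with the specific choices $p=2(m_1^X-m_2^X)/\zeta^2$ and $q=2(m_1^Y-m_2^Y)/\xi^2$. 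Linearity in $a$ and $b$ respectively collapses the divergence differences to $\div(a w^X)$ and $\div(b w^Y)$, where $w^X:=m_1^X-m_2^X$ and $w^Y:=m_1^Y-m_2^Y$.

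The core calculation is an integration by parts in each population's variable. Using $\div(aw^X)=a\cdot\nabla w^X+w^X\div a$ together with the compactness of $\supp w^X\subset\Omega_1^X(T)$ (inherited from the construction \eqref{insiemeM}), I would write
$$
\bigl\langle w^X,a\cdot\nabla w^X\bigr\rangle_{L^2}=\tfrac{1}{2}\int a\cdot\nabla\bigl((w^X)^2\bigr)\,dx=-\tfrac{1}{2}\int (w^X)^2\,\div a\,dx,
$$
so that $\langle w^X,\div(aw^X)\rangle_{L^2}=\tfrac{1}{2}\int (w^X)^2\,\div a\,dx$. Invoking $\|\div a\|_{L^\infty}\le M^X$ from \eqref{campispaziali}, this yields
$$
\left|\frac{2}{\zeta^2}\bigl\langle w^X,\div(aw^X)\bigr\rangle_{L^2(\Omega_1^X(T))}\right|\le\frac{M^X\,\|w^X\|^2_{L^2(\Omega_1^X(T))}}{\zeta^2}.
$$
The identical argument on the $Y$-side produces $M^Y\,\|w^Y\|^2_{L^2(\Omega_1^Y(T))}/\xi^2$, and absorbing both constants into $M=\max\{M^X,M^Y\}$ recovers the first summand in the asserted bound.

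The running cost piece is controlled directly by the uniform strong continuity hypothesis on $\ell$, which gives $\omega_\ell(\|w^X\|_{H^1}+\|w^Y\|_{H^1}+|t_1-t_2|)$ independently of $(a,b)$. Combining the three contributions and taking the supremum over $(a,b)\in\tilde\cA\times\tilde\cB$ yields the stated estimate. The only delicate point is the justification of the integration by parts: it is essential that the controls $a,b$ lie in $W^{2,\infty}$ (so that $\div a,\div b$ are bounded) and that $w^X,w^Y$ have compact support strictly inside the integration domains, so that no boundary term arises; both are guaranteed by membership in $\tilde\cM$ together with the definition of $\tilde\cA,\tilde\cB$ in \eqref{campispaziali}. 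Aside from this, the argument is a direct estimation and no cancellation or subtle sup–inf manipulation is needed.
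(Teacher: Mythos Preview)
Your proof is correct and follows essentially the same route as the paper: both reduce to estimating, at fixed $(a,b)$, the pairing $\langle w^X,\div(aw^X)\rangle$ (and its $Y$-analogue), then apply the integration-by-parts identity $\langle w^X,\div(aw^X)\rangle=\tfrac{1}{2}\int(w^X)^2\,\div a\,dx$ together with the bound $\|\div a\|_{L^\infty}\le M^X$ and the modulus $\omega_\ell$. The only stylistic difference is in the reduction step: the paper selects specific near-optimizers $b'\in\tilde\cB$ and $a'\in\tilde\cA$ to pass from the $\min\max$ to a fixed pair, whereas you invoke the cleaner abstract inequality $|\inf_b\sup_a\Psi_1-\inf_b\sup_a\Psi_2|\le\sup_{a,b}|\Psi_1-\Psi_2|$; the latter is slightly more economical but neither approach offers a real advantage over the other here.
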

\begin{proof}
Let $b'\in\tilde\cB$ be such that
\begin{multline*}
H\left(m_1^X, m_1^Y, t_1, \frac{2(m_1^X-m_2^X)}{\zeta^2}, \frac{2(m_1^Y-m_2^Y)}{\xi^2}\right)\\
\geq \left\langle\frac{2(m_1^X-m_2^X)}{\zeta^2}, \div(am_1^X)\right\rangle_{L^2(\Omega_1^X(T))}+\left\langle \frac{2(m_1^Y-m_2^Y)}{\xi^2}, \div(b'm_1^Y)\right\rangle_{L^2(\Omega_1^Y(T))}
-\ell(m_1^X, m_1^Y, t_1, a, b')
\end{multline*}
for every $a\in\tilde\cA$. Then, we consider $a'\in\tilde\cA$ such that
\begin{align*}
&H\left(m_2^X, m_2^Y, t_2, \frac{2(m_1^X-m_2^X)}{\zeta^2}, \frac{2(m_1^Y-m_2^Y)}{\xi^2}\right)\\
&\leq \left\langle \frac{2(m_1^X-m_2^X)}{\zeta^2}, \div(a'm_2^X)\right\rangle_{L^2(\Omega_1^X(T))}+\left\langle \frac{2(m_1^Y-m_2^Y)}{\xi^2}, \div(b'm_2^Y)\right\rangle_{L^2(\Omega_1^Y(T))}
-\ell(x_2, m_2, t_2, a', b').
\end{align*}
Combining the above estimates, we deduce the following
\begin{multline*}
\left|H\left(m_2^X, m_2^Y, t_2, \frac{2(m_1^X-m_2^X)}{\zeta^2}, \frac{2(m_1^Y-m_2^Y)}{\xi^2}\right)-H\left(m_1^X, m_1^Y, t_1, \frac{2(m_1^X-m_2^X)}{\zeta^2}, \frac{2(m_1^Y-m_2^Y)}{\xi^2}\right)\right|\\
\leq\Bigg|\left\langle \frac{2(m_1^X-m_2^X)}{\zeta^2}, \div(a'm_2^X)\right\rangle_{L^2(\Omega_1^X(T))}+\left\langle \frac{2(m_1^Y-m_2^Y)}{\xi^2}, \div(b'm_2^Y)\right\rangle_{L^2(\Omega_1^Y(T))}
-\ell(m_2^X, m_2^Y, t_2, a', b')\\
-\left\langle\frac{2(m_1^X-m_2^X)}{\zeta^2}, \div(a'm_1^X)\right\rangle_{L^2(\Omega_1^X(T))}-\left\langle \frac{2(m_1^Y-m_2^Y)}{\xi^2}, \div(b'm_1^Y)\right\rangle_{L^2(\Omega_1^Y(T))}
+\ell(m_1^X, m_1^Y, t_1, a', b')\Bigg|\\
\leq\Bigg|\left\langle \frac{2(m_1^X-m_2^X)}{\zeta^2}, \div(a'(m_2^X-m_1^X))\right\rangle_{L^2(\Omega_1^X(T))}
+\left\langle\frac{2(m_1^Y-m_2^Y)}{\xi^2}, \div(b'(m_2^Y-m_1^Y))\right\rangle_{L^2(\Omega_1^Y(T))}\\
+\omega_{\ell}\left(\|m_1^X-m_2^X\|_{H^1(\Omega_1^X(T))}+\|m_1^Y-m_2^Y\|_{H^1(\Omega_1^Y(T))}+|t_1-t_2|\right)\Bigg|\\
\leq \left|\frac{1}{\zeta^2}\int_{\Omega_1^X(T)}\div(a')(m_1^X-m_2^X)^2dx\right|+\left|\frac{1}{\xi^2}\int_{\Omega_1^Y(T)}\div(b')(m_1^Y-m_2^Y)^2dx\right|\\
+\omega_{\ell}\left(\|m_1^X-m_2^X\|_{H^1(\Omega_1^X(T))}+\|m_1^Y-m_2^Y\|_{H^1(\Omega_1^Y(T))}+|t_1-t_2|\right)\\
\leq M\left(\frac{\|m_1^X-m_2^X\|^2_{L^2(\Omega_1^X(T))}}{\zeta^2}+\frac{\|m_1^Y-m_2^Y\|^2_{L^2(\Omega_1^Y(T))}}{\xi^2}\right)\\
+\omega_{\ell}\left(\|m_1^X-m_2^X\|_{H^1(\Omega_1^X(T))}+\|m_1^Y-m_2^Y\|_{H^1(\Omega_1^Y(T))}+|t_1-t_2|\right),
\end{multline*}
where in the second-to-last inequality integration by parts was used, along with the fact  that $m_1^X,m_2^X$ and $m_1^Y,m_2^Y$ vanish at $\partial\Omega_1^X(T)$ and $\partial\Omega_1^Y(T)$, respectively.
\end{proof}
{\it Proof (of Theorem \ref{teoconfronto}).} Let $G=\sup_{\tilde\cM}(u_1-u_2)$. We aim to show that $G \leq 0$. To do so, we proceed by contradiction, assuming instead that $G>0$.
\par
To simplify the proof, we replace $u_1$ with 
$$
(u_1(m^X, m^Y, t))_{\eta}:=u_1(m^X, m^Y, t)-\eta t
$$
for some $\eta>0$ sufficiently small. This allows us to assume, without loss of generality, that $u_1$ is a strict subsolution of \eqref{HJI} since $(u_1)_{\eta}$ is a subsolution of 
\begin{equation}
\label{eqreduced}
-\frac{\partial}{\partial t}(u_1)_{\eta}+H(m^X, m^Y, t, D_X(u_1)_{\eta}, D_Y(u_1)_{\eta})\leq-\eta<0\quad\text{in}\ \tilde\cM.
\end{equation}
It will be sufficient to show that $(u_1)_{\eta}\leq u_2$ in $\tilde\cM$ for any $\eta$, and then take the limit  as $\eta \rightarrow 0$. Note that $(u_1)_{\eta}\leq u_2$ also holds on $\cM^X(T)\times\cM^Y(T)\times\{T\}$. For simplicity, we will omit $\eta$ and use $u_1$ in place  of $(u_1)_{\eta}$. Here, there is a difficulty with $\cM^X(0)\times\cM^Y(0)\times\{0\}$ that can be addressed by adapting \cite[Lemma 4.6]{BCM} to the context of two masses.
\par
Introduce the following test function
\begin{multline*}
\Psi_{\ep, \xi, \a}(m^X_1, m^Y_1, t_1, m^X_2, m^Y_2, t_2)=u_1(m_1^X, m_1^Y, t_1)-u_2(m_2^X, m_2^Y, t_2)\\
-\frac{\|m_1^X-m_2^X\|^2_{L^2(\Omega_1^X(T))}}{\ep^2}-\frac{\|m_1^Y-m_2^Y\|^2_{L^2(\Omega_1^Y(T))}}{\xi^2}-\frac{|t_1-t_2|^2}{\a^2}.
\end{multline*}
Since $\Psi_{\ep, \xi, \a}$ is continuous on $\tilde\cM\times\tilde\cM$, there exists a maximum point $(\tilde m^X_1,\tilde m^Y_1,\tilde t_1,\tilde m^X_2,\tilde m^Y_2,\tilde t_2)$, and we set $\bar G:=\Psi_{\ep, \xi, \a}(\tilde m^X_1,\tilde m^Y_1,\tilde t_1,\tilde m^X_2,\tilde m^Y_2,\tilde t_2)$. We prove the following:
\begin{itemize}
\item[$(1)$] when $\ep, \xi, \s\to0$, then $\bar G\to G$;
\item[$(2)$] $u_1(\tilde m_1^X, \tilde m_1^Y, \tilde t_1)-u_2(\tilde m_2^X, \tilde m_2^Y, \tilde t_2)\to G$ as $\ep, \xi, \a\to0$;
\item[$(3)$] it holds
$$
\frac{\|m_1^X-m_2^X\|^2_{L^2(\Omega_1^X(T))}}{\ep^2},\frac{\|m_1^Y-m_2^Y\|^2_{L^2(\Omega_1^Y(T))}}{\xi^2},\frac{|t_1-t_2|^2}{\a^2}\to0\quad\text{as }\ \ep, \xi, \a\to0;
$$
\item[$(4)$] $(\tilde m_1^X, \tilde m_1^Y, \tilde t_1), (\tilde m_2^X, \tilde m_2^Y, \tilde t_2)\in\tilde\cM$.
\end{itemize}
Whereas $(\tilde m^X_1,\tilde m^Y_1,\tilde t_1,\tilde m^X_2,\tilde m^Y_2,\tilde t_2)$ is a maximum point of $\Psi_{\ep, \xi, \a}$, we have
\begin{multline*}
u_1(m_1^X, m_1^Y, t_1)-u_2(m_2^X, m_2^Y, t_2)
-\frac{\|m_1^X-m_2^X\|^2_{L^2(\Omega_1^X(T))}}{\ep^2}-\frac{\|m_1^Y-m_2^Y\|^2_{L^2(\Omega_1^Y(T))}}{\xi^2}-\frac{|t_1-t_2|^2}{\a^2}\\
\leq u_1(\tilde m_1^X,\tilde m_1^Y,\tilde t_1)-u_2(\tilde m_2^X,\tilde m_2^Y,\tilde t_2)-\frac{\|\tilde m_1^X-\tilde m_2^X\|^2_{L^2(\Omega_1^X(T))}}{\ep^2}-\frac{\|\tilde m_1^Y-\tilde m_2^Y\|^2_{L^2(\Omega_1^Y(T))}}{\xi^2}-\frac{|\tilde t_1-\tilde t_2|^2}{\a^2}\\
=\bar G
\end{multline*}
for any $(m_1^X, m_1^Y, t_1), (m_2^X, m_2^Y, t_2)\in\tilde\cM$. Let us choose $m_1^X=m_2^X$, $m_1^Y=m_2^Y$ and $t_1=t_2$ in the left-hand side to obtain
$$
u_1(m_1^X, m_1^Y, t_1)-u_2(m_1^X, m_1^Y, t_1)\leq\bar G\quad\text{for all }(m_1^X, m_1^Y, t_1)\in\tilde\cM,
$$
and, by taking the supremum over $(m_1^X, m_1^Y, t_1)$, we get $G\leq\bar G$.
\par
Using the boundedness of  $u_1$ and $u_2$ and setting $R:=\max\{\|u_1\|_{\infty}, \|u_2\|_{\infty}\}$, we have
\begin{multline*}
G\leq u_1(\tilde m_1^X,\tilde m_1^Y,\tilde t_1)-u_2(\tilde m_2^X,\tilde m_2^Y,\tilde t_2)
-\frac{\|\tilde m_1^X-\tilde m_2^X\|^2_{L^2(\Omega_1^X(T))}}{\ep^2}-\frac{\|\tilde m_1^Y-\tilde m_2^Y\|^2_{L^2(\Omega_1^Y(T))}}{\xi^2}-\frac{|\tilde t_1-\tilde t_2|^2}{\a^2}\\
\leq 2R-\frac{\|\tilde m_1^X-\tilde m_2^X\|^2_{L^2(\Omega_1^X(T))}}{\ep^2}-\frac{\|\tilde m_1^Y-\tilde m_2^Y\|^2_{L^2(\Omega_1^Y(T))}}{\xi^2}-\frac{|\tilde t_1-\tilde t_2|^2}{\a^2}.
\end{multline*}
Recalling our assumption on $G$, i.e., $G>0$, we deduce
\begin{equation}
\label{minoreR}
\frac{\|\tilde m_1^X-\tilde m_2^X\|^2_{L^2(\Omega_1^X(T))}}{\ep^2}+\frac{\|\tilde m_1^Y-\tilde m_2^Y\|^2_{L^2(\Omega_1^Y(T))}}{\xi^2}+\frac{|\tilde t_1-\tilde t_2|^2}{\a^2}\leq 2R.
\end{equation}
In particular, one has that $\|\tilde m_1^X-\tilde m_2^X\|_{L^2(\Omega_1^X(T))}, \|\tilde m_1^Y-\tilde m_2^Y\|_{L^2(\Omega_1^Y(T))}, |\tilde t_1-\tilde t_2|\to0$ as $\ep, \xi, \a\to0$.
\par
Due to the compactness of $\tilde\cM$ in $L^2(\R^d)\times L^2(\R^d)\times[0, T]$, we can assume without loss of generality that the points $(\tilde m_1^X, \tilde m_1^Y, \tilde t_1), (\tilde m_2^X, \tilde m_2^Y, \tilde t_2)$ converge to the same point. As a consequence of this fact the following inequalities hold
$$
\liminf (u_1(\tilde m_1^X, \tilde m_1^Y, \tilde t_1)-u_2(\tilde m_2^X, \tilde m_2^Y, \tilde t_2))\leq\limsup(u_1(\tilde m_1^X, \tilde m_1^Y, \tilde t_1)-u_2(\tilde m_2^X, \tilde m_2^Y, \tilde t_2))\leq G
$$
and
\begin{multline}
\label{disugmax}
G\leq u_1(\tilde m_1^X, \tilde m_1^Y, \tilde t_1)-u_2(\tilde m_2^X, \tilde m_2^Y, \tilde t_2)-\frac{\|\tilde m_1^X-\tilde m_2^X\|^2_{L^2(\Omega_1^X(T))}}{\ep^2}-\frac{\|\tilde m_1^Y-\tilde m_2^Y\|^2_{L^2(\Omega_1^Y(T))}}{\xi^2}-\frac{|\tilde t_1-\tilde t_2|^2}{\a^2}\\
\leq u_1(\tilde m_1^X, \tilde m_1^Y, \tilde t_1)-u_2(\tilde m_2^X, \tilde m_2^Y, \tilde t_2).
\end{multline}
Therefore, we deduce that $\lim (u_1(\tilde m_1^X, \tilde m_1^Y, \tilde t_1)-u_2(\tilde m_2^X, \tilde m_2^Y, \tilde t_2))=G$. Moreover, thanks to \eqref{disugmax},
\begin{multline*}
\bar G=u_1(\tilde m_1^X, \tilde m_1^Y, \tilde t_1)-u_2(\tilde m_2^X, \tilde m_2^Y, \tilde t_2)\\
-\frac{\|\tilde m_1^X-\tilde m_2^X\|^2_{L^2(\Omega_1^X(T))}}{\ep^2}-\frac{\|\tilde m_1^Y-\tilde m_2^Y\|^2_{L^2(\Omega_1^Y(T))}}{\xi^2}-\frac{|\tilde t_1-\tilde t_2|^2}{\a^2}\to G
\end{multline*}
and, since $u_1(\tilde m_1^X, \tilde m_1^Y, \tilde t_1)-u_2(\tilde m_2^X, \tilde m_2^Y, \tilde t_2)\to G$, we immediately get
$$
-\frac{\|\tilde m_1^X-\tilde m_2^X\|^2_{L^2(\Omega_1^X(T))}}{\ep^2}-\frac{\|\tilde m_1^Y-\tilde m_2^Y\|^2_{L^2(\Omega_1^Y(T))}}{\xi^2}-\frac{|\tilde t_1-\tilde t_2|^2}{\a^2}\to0.
$$
This concludes the proof of $(1)$, $(2)$ and $(3)$.
\par
For $(4)$, we observe that, if $(m_1^X, m_1^Y, t_1)$ is the limit of a subsequence of $(\tilde m_1^X, \tilde m_1^Y, \tilde t_1),(\tilde m_2^X, \tilde m_2^Y, \tilde t_2)$, then $u_1(m_1^X, m_1^Y, t_1)-u_2(m_1^X, m_1^Y, t_1)=M>0$ and therefore $(m_1^X, m_1^Y, t_1)$ cannot be on $\cM^X(T)\times \cM^Y(T)\times\{T\}$.
\par
Suppose that $\ep, \xi, \a$ are sufficiently small and such that $(4)$ holds. Since $(\tilde m_1^X, \tilde m_1^Y, \tilde t_1, \tilde m_2^X, \tilde m_2^Y, \tilde t_2)$ is a maximum point of $\Psi_{\ep, \xi, \a}$, then $(\tilde m_1^X, \tilde m_1^Y, \tilde t_1)$ is a maximum point of the following function
$$
(m_1^X, m_1^Y, t_1)\longmapsto u_1(m_1^X, m_1^Y, t_1)-\phi^1(m_1^X, m_1^Y, t_1),
$$
where
\begin{multline*}
\phi^1(m_1^X, m_1^Y, t_1)=u_2(\tilde m_2^X, \tilde m_2^Y, \tilde t_2)
+\frac{\|m_1^X-\tilde x_2^X\|^2_{L^2(\Omega_1^X(T))}}{\ep^2}+\frac{\|m_1^Y-\tilde m_2^Y\|^2_{L^2(\Omega_1^Y(T))}}{\xi^2}+\frac{|t_1-\tilde t_2|^2}{\a^2},
\end{multline*}
but $u_1$ is a viscosity subsolution of \eqref{eqreduced} and $(\tilde m_1^X, \tilde m_1^Y, \tilde t_1)\in\tilde\cM$. Hence, we have that
\begin{multline*}
-\phi^1_t(\tilde m_1^X, \tilde m_1^Y, \tilde t_1)+H(\tilde m_1^X, \tilde m_1^Y, \tilde t_1, D_X\phi^1(\tilde m_1^X, \tilde m_1^Y, \tilde t_1), D_Y\phi^1(\tilde m_1^X, \tilde m_1^Y, \tilde t_1))\\
=\frac{2(\tilde t_2-\tilde t_1)}{\a^2}+H\left(\tilde m_1^X, \tilde m_1^Y, \tilde t_1, \frac{2(\tilde m_1^X-\tilde m_2^X)}{\ep^2}, \frac{2(\tilde m_1^Y-\tilde m_2^Y)}{\xi^2}\right)\leq-\eta.
\end{multline*}
Similarly, $(\tilde m_2^X, \tilde m_2^Y, \tilde t_2)$ is a maximum point of the function
$$
(m_2^X, m_2^Y, t_2)\longmapsto-u_2(m_2^X, m_2^Y, t_2)+\phi^2(m_2^X, m_2^Y, t_2),
$$
where
\begin{multline*}
\phi^2(m_2^X, m_2^Y, t_2)=u_1(\tilde m_1^X, \tilde m_1^Y, \tilde t_1)-\frac{\|\tilde m_1^X-m_2^X\|^2_{L^2(\Omega_1^X(T))}}{\ep^2}-\frac{\|\tilde m_1^Y-m_2^Y\|^2_{L^2(\Omega_1^Y(T))}}{\xi^2}-\frac{|\tilde t_1-t_2|^2}{\a^2},
\end{multline*}
but $u_2$ is a viscosity supersolution of \eqref{HJI} and $(\tilde m_2^X, \tilde m_2^Y, \tilde t_2)\in\tilde\cM$. Then
\begin{multline*}
-\phi^2_t(\tilde m_2^X, \tilde m_2^Y, \tilde t_2)+H(\tilde m_2^X, \tilde m_2^Y, \tilde t_2, D_X\phi^2(\tilde m_2^X, \tilde m_2^Y, \tilde t_2), D_Y\phi^2(\tilde m_2^X, \tilde m_2^Y, \tilde t_2))\\
=\frac{2(\tilde t_2-\tilde t_1)}{\a^2}+H\left(\tilde m_2^X, \tilde m_2^Y, \tilde t_2, \frac{2(\tilde m_1^X-\tilde m_2^X)}{\ep^2}, \frac{2(\tilde m_1^Y-\tilde m_2^Y)}{\xi^2}\right)\geq0.
\end{multline*}
By subtracting the two viscosity inequalities, we obtain
\begin{multline*}
H\left(\tilde m_1^X, \tilde m_1^Y, \tilde t_1, \frac{2(\tilde m_1^X-\tilde m_2^X)}{\ep^2}, \frac{2(\tilde m_1^Y-\tilde m_2^Y)}{\xi^2}\right)\\
-H\left(\tilde m_2^X, \tilde m_2^Y, \tilde t_2, \frac{2(\tilde m_1^X-\tilde m_2^X)}{\ep^2}, \frac{2(\tilde m_1^Y-\tilde m_2^Y)}{\xi^2}\right)\leq-\eta.
\end{multline*}
Now we use Lemma \ref{lemmaham} to get
\begin{multline*}
H\left(\tilde m_1^X, \tilde m_1^Y, \tilde t_1, \frac{2(\tilde m_1^X-\tilde m_2^X)}{\ep^2}, \frac{2(\tilde m_1^Y-\tilde m_2^Y)}{\xi^2}\right)-H\left(\tilde m_2^X, \tilde m_2^Y, \tilde t_2, \frac{2(\tilde m_1^X-\tilde m_2^X)}{\ep^2}, \frac{2(\tilde m_1^Y-\tilde m_2^Y)}{\xi^2}\right)\\
\leq M\left(\frac{\|\tilde m_1^X-\tilde m_2^X\|_{L^2(\Omega_1^X(T))}}{\ep^2}+\frac{\|\tilde m_1^Y-\tilde m_2^Y\|^2_{L^2(\Omega_1^Y(T))}}{\xi^2}\right)\\
+\omega_{\ell}\left(\|\tilde m_1^X-\tilde m_2^X\|_{H^1(\Omega_1^X(T))}+\|\tilde m_1^Y-\tilde m_2^Y\|_{H^1(\Omega_1^Y(T))}+|\tilde t_1-\tilde t_2|\right)\leq-\eta.
\end{multline*}
But, on one side, $\|\tilde m_1^X-\tilde x_2^X\|_{L^2(\Omega_1^X(T))}, \|\tilde m_1^Y-\tilde m_2^Y\|_{L^2(\Omega_1^Y(T))}, |\tilde t_1-\tilde t_2|\to0$ as $\ep, \xi, \a\to0$ (see \eqref{minoreR}) and hence also $\|\tilde m_1^X-\tilde m_2^X\|_{H^1(\Omega_1^X(T))}\to0$ and $\|\tilde m_1^Y-\tilde m_2^Y\|_{H^1(\Omega_1^Y(T))}\to0$ as $\xi\to0$ since \eqref{modcontm} holds. On the other side
$$
M\left(\frac{\|\tilde m_1^X-\tilde m_2^X\|_{L^2(\Omega_1^X(T))}}{\ep^2}+\frac{\|\tilde m_1^Y-\tilde m_2^Y\|^2_{L^2(\Omega_1^Y(T))}}{\xi^2}\right)\to0\quad\text{when }\ep,\xi\to0.
$$
The above inequality gives a contradiction.
$\hfill\square$

\section{Two one-dimensional examples}
\label{sec:examples}
In this section, we argue about two one-dimensional examples ($d=1$).
\par\medskip
Following the one-dimensional examples reported in \cite{BCM}, here we may consider the case where the running cost $\ell$ is zero and the final cost is given by
\begin{equation}
\label{eq:g_1}
\psi_1(\bar m^X,\bar m^Y)=\int_{\mathbb{R}}\bar m^X(x)\bar m^Y(x)dx
\end{equation}
or, for some a-priori fixed $\delta>0$,
\begin{equation}
\label{eq:g_2}
\psi_2(\bar m^X,\bar m^Y)=\left(\int_{\tilde m^X-\delta}^{\tilde m^X+\delta}\bar m^Y(x)dx-\int_{\tilde m^Y-\delta}^{\tilde m^Y+\delta}\bar m^X(x)dx\right)^2,
\end{equation}
where $\tilde m^X$ and $\tilde m^Y$ are the mean of $\bar m^X:\int_{\mathbb{R}}xm^X(x)dx$ and of $\bar m^Y:\int_{\mathbb R}xm^Y(x)dx$, respectively.
Or
\begin{equation}
\label{eq:g_3}
\psi_3(\bar m^X,\bar m^Y)=(\tilde m^X-\tilde m^Y)^2,
\end{equation}
\noindent
Note that, since $\ell\equiv 0$, then the lower and upper Isaacs equations coincide and hence, under our hypotheses on controls as in the previous sections, the game has a value, which in the  sequel we are going to denote by $V$. Moreover, in all examples, besides the final datum, the Isaacs problem is given by
\begin{equation}
\label{eq:Isaacs_l0}
-V_t+\sup_a\langle D_XV,(a\bar m^X)_x\rangle+\inf_{b}\langle D_YV,(b\bar m^Y)_x\rangle=0.
\end{equation}
The cases \eqref{eq:g_1}, \eqref{eq:g_2} seem to be not immediate to be treated ``by hand'', as the example 1 in \cite{BCM}, because they are likely to involve a change in the shape of the masses making the solution highly dependent upon the Fr\'echet differentials. However, we are going to briefly argue about the case \eqref{eq:g_1} in the present {\it second example}. 
\par\smallskip
{\it First example}. The case \eqref{eq:g_3} is instead easier as example 2 in \cite{BCM}, giving a ``rigid optimal movement''. Indeed, suppose that both masses have mass equal $1$, compact support and that the admissible controls take values in the interval $[-c,c]$. Then the value function is the constant in time:
$$
V(\bar m^X,\bar m^Y,t)=\psi_3(\bar m^X,\bar m^Y)=(\tilde m^X-\tilde m^Y)^2.
$$
Indeed, $V_t=0, D_XV=2(\tilde m^X-\tilde m^Y)x,D_YV=-2(\tilde m^X-\tilde m^Y)x$, and hence \eqref{eq:Isaacs_l0} becomes
$$
-0+2c|\tilde m^X-\tilde m^Y|-2c|\tilde m^X-\tilde m^Y|=0,
$$
where, for example, we have used the fact that, integrating by parts and recalling that $m^X$ is positive, has compact support and total mass $1$,
$$
2(\tilde m^X-\tilde m^Y)\sup_a\int_{\mathbb{R}}x(a\bar m^X)_xdx=-2(\tilde m^X-\tilde m^Y)\sup_a\int_{\mathbb{R}}a\bar m^Xdx=2c|\tilde m^X-\tilde m^Y|.
$$
The optimal movement is then ``rigid'': the (Nash) equilibrium is given by simultaneously moving at the maximal velocity $c$ for the agents of both masses and in the same suitable one of the two directions (left or right).
\par\medskip
{\it Second example.} Here we consider the problem with zero running cost and final cost \eqref{eq:g_1}
$$
\psi_1(\bar m^X,\bar m^Y)=\int_{\mathbb{R}}\bar m^X(x)\bar m^Y(x)dx.
$$
We assume that the admissible controls at disposal of the two masses are constant with respect to $x\in\mathbb{R}$ (and hence the movements are rigid) and that the velocities are the same. This means that the sets of controls \eqref{campispaziali} are the same for both masses and that $|a|,|b|\le c$, $a_x,b_x=0$. In this case, the value function turns out to be  constant in time, that is
$$
V(\bar m^X,\bar m^Y,t)=\psi_1(\bar m^X,\bar m^Y)=\int_{\mathbb{R}}\bar m^X(x)\bar m^Y(x)dx.
$$
Indeed, using the fact that the controls are constant w.r.t. $x$, we have
\begin{equation}
\label{array1}
\begin{array}{ll}
\displaystyle
V_t=0,\ D_{X}V=\bar m^Y,\ D_{Y}V=\bar m^X,\\
\displaystyle
\sup_{a\in[-c,c]}\langle D_{X}V,(a\bar m^X)_x\rangle=\sup_{a\in[-c,c]}\left(a\int_{\mathbb{R}}\bar m^X_x\bar m^Ydx\right)=c\left|\int_{\mathbb{R}}\bar m^X_x\bar m^Ydx\right|,\\
\displaystyle
\inf_{b\in[-c,c]}\langle D_{Y}V,(b\bar m^Y)_x\rangle=\inf_{b\in[-c,c]}\left(b\int_{\mathbb{R}}\bar m^Y_x\bar m^Xdx\right)=-c\left|\int_{\mathbb{R}}\bar m^Y_x\bar m^Xdx\right|.
\end{array}
\end{equation}
But, since the masses have compact supports, it is
$$
\int_{\mathbb{R}}(\bar m^X\bar m^Y)_xdx=0\ \Rightarrow\ \int_{\mathbb{R}}\bar m^X_x\bar m^Ydx=-\int_{\mathbb{R}}\bar m^Y_x\bar m^Xdx,
$$
and we conclude that the sum of the last two lines in \eqref{array1} is, for some optimal $\tilde c\in\{-c,c\}$,
$$
\tilde c\left(\int \bar m^X_x\bar m^Ydx+\int \bar m^Y_x\bar m^Xdx\right)=\tilde c\int(\bar m^X\bar m^Y)_xdx=0,
$$
and then equation \eqref{eq:Isaacs_l0} is satisfied by $V$. Also in this case the (Nash) equilibrium is given by a simultaneous movement of both masses at the maximal velocity $c$ in the direction given by the sign of $\tilde c$.
\par
Of course, a more general and realistic model should require, as admissible, controls which are also space-dependent. In that case, the optimal behavior in general is not a rigid movement but involves a change masses' shape. For example, suppose that a population of lions in the savannah is initially mixed with a population of antelopes. That is the mass of the antelopes, $m^A$, has a support that contains in its interior the support of the mass of the lions, $m^L$. Assuming the whole mass of lions equal to $1$, at the initial time it is
$$
\int_{\mathbb{R}}m^Am^Ldx\ge a'>0,
$$
where $a'>0$ is such that $m^A\ge a'$ in the support of $m^L$. It is quite evident that a good behavior of the antelopes is to scatter as much as possible, with maximal velocity (which is $c>0$, the same of the lions by hypothesis) at the boundary of their support, obtaining then $m^A\le a''<a'$ in the support of the lions and hence
$$
\int_{\mathbb{R}}m^Am^Ldx\le a''<a',
$$
\noindent
showing that a change of shape of the mass $m^A$ is in this case necessary for optimization. A possible control for the antelopes at time $t$, in order to perform as above, is, if at the time $t$ the support of the antelopes is $[\xi_1,\xi_2]\subseteq\mathbb{R}$,
$$
a(x,t)=\begin{cases}
-c,&x\le \xi_1\\
c,&x\ge \xi_2\\
\text{linearly interpolating between $-c$ and $c$ in $[\xi_1,\xi_2]$.}
\end{cases}
$$
Of course, a simple guess with your hands for the optimal behavior of lions and antelopes, as function of $m^A,m^L$ and their shapes, is, in general, almost unachievable.

\section{Something on a possible parabolic case}
\label{sec:parabolic}
The model described in the previous sections is in some sense ``centralized'' in the evolution of the masses. The fields $\beta$ in \eqref{eq:intro_continuity} are not ``constructed'' by the single choices of the single agents of the masses. A natural question may be to weaken such a centralized feature. A first step could be to introduce a stochastic disturbance in the evolution of the single agents represented by the ordinary system \eqref{ode}. This is also a natural way to look at the movement of a huge number of agents/particles forming a mass. We then may assume that the evolution of the single agents of the masses are subject to a stochastic noise in their motion, that is, system \eqref{ode} is replaced by
$$
dY_s(s)=\beta(Y_s,s)+\sqrt{2\sigma}B_s,
$$
where $B_s$ is a standard Brownian motion on $\mathbb{R}^d$, and $\sigma>0$. In this case, the expected equation for the motion of the mass is the so-called Fokker-Planck parabolic equation
\begin{equation}
\label{eq:parabolic}
m_t-\sigma\Delta m+\mbox{div}(\beta m)=0.
\end{equation}
Whenever the field $\beta$ is sufficiently regular (and the hypotheses on the admissible controls for our problem (see \eqref{controlsX}-\eqref{controlsY}), together with the fact that they can be assumed with compact support, are certainly in that direction), and the initial datum is also regular, the solution of the Fokker-Planck equation is expected to belong to $C^0([0, T], H^1(\mathbb{R}^d))$ (see \cite{lady,car}), and one also expects that, for a test function $\varphi\in C^1(L^2(\R^d))$, the composition $\psi:t\longmapsto\varphi(m(t))$ (see Remark \ref{osscontinuita}) is differentiable and satisfies
 $$
\psi'(t)=\langle D_m\varphi(m),\sigma\Delta m-\mbox{div}(\beta m)\rangle_{L^2(\R^d)}
$$
or, for a test function $\varphi\in C^1(H^1(\R^d))$,
$$
\psi'(t)=-\langle \nabla D_m\varphi(m),\sigma\nabla m\rangle_{L^2(\R^d)}-\langle D_m\varphi(m),\mbox{div}(\beta m)\rangle_{L^2(\R^d)},
$$
where $\nabla D_m\varphi(m)\in L^2(\R^d)$ is the (spatial) gradient of  $D_m\varphi(m)$ when it can be identified with a function in $H^1(\R^d)$. Hence, the expected Isaacs equations (here only the upper equations) are, respectively:
\begin{multline}
\label{firstequationex}
-\underbar{V}_t+\inf_{b}\sup_{a}\left\{\langle D_{X}\underbar{V},-\sigma\Delta m^X+{\rm div}(am^X)\rangle_{L^2(\R^d)}+\langle D_{Y}\underbar{V},-\sigma\Delta m^Y+{\rm div}(bm^Y)\rangle_{L^2(\R^d)}\right.\\
\left.-\ell(m^X,m^Y,t,a,b)\right\}=0
\end{multline}
or
\begin{multline}
\label{secondequationex}
-\underbar{V}_t+\inf_{b}\sup_{a}\left\{\langle \nabla D_{X}\underbar{V},\sigma\nabla m^X\rangle_{L^2(\R^d)}+\langle D_X\underbar{V},\mbox{div}(am^X)\rangle_{L^2(\R^d)}\right.\\
\left.+\langle \nabla D_{Y}\underbar{V},\sigma\nabla m^Y\rangle_{L^2(\R^d)}+\langle D_Y\underbar{V},\div(bm^Y)\rangle_{L^2(\R^d)}-\ell(m^X,m^Y,t, a, b)\right\}=0.
\end{multline}
Note that, in any case, such equations are infinite dimensional equations of first order because they involve only the (first) Fr\'echet differential with respect to $m^X$ and $m^Y$.
\par
A rigorous analysis, derivation and justification of such Isaacs equation will be the subject of future works. Moreover, an interesting vanishing-viscosity-type observation seems to reasonably hold in our case, and hence to be certainly worth studying. Let, for any $\sigma>0$, $u_\sigma$ be a viscosity solution of, let us say, the second equation \eqref{secondequationex}. Observe that equations \eqref{firstequationex}-\eqref{secondequationex} are expected to hold in a similar set as $\tilde\cM$ \eqref{insiemeM}, which is compact in $H^1(\R^d)\times H^1(\R^d)\times[0,T]$. Since for $\sigma=0$ the solution of \eqref{eq:parabolic} (i.e. the solution of \eqref{eq:intro_continuity}) has already the good regularity, one may expect that such a compact set $\tilde\cM$ can be somehow taken independent on $\sigma$. Hence the following result may be expected but certainly worth studying.
\par
Suppose that $u_\sigma$, for $\sigma\to0$ uniformly converges on $\tilde\cM$ to a function $u$. Then $u$ is a viscosity solution of \eqref{HJI}. Indeed, let $P_0\in\tilde\cM$ be a point of local strict maximum for $u-\varphi$. Since $\tilde\cM$ is compact, let $P_\sigma$ be a point of maximum in $N\cap\tilde\cM$ for $u_\sigma-\varphi$, where $N$ is the closed neighborhood of $P_0$ in which it is a strict point of maximum. Then by uniform convergence and strict maximality, $P_\sigma\to P_0$ as $\sigma\to0$. In particular, for $\sigma$ small enough, $P_\sigma$ is also a point of local maximum for $u_\sigma-\varphi$. Since $u_\sigma$ is subsolution of the corresponding equation, by the convergence and the equi-boundedness of the points $P_\sigma,P_0$ in the corresponding norm involved in $\tilde\cM$, we get that $u$ is a subsolution of \eqref{HJI}.
\par
We point out again that such a ``vanishing-viscosity'' is not given by adding a second order term to the infinite dimensional Isaacs equation and letting it go to zero, but it is given by adding a second order term in the controlled evolution of the masses. These are still deterministic equations because they come from a stochastic perturbation of the first order ordinary equations which give the motion of the microscopic agents of the masses.
\par\bigskip\noindent
No datasets were generated or analysed during the current study
\par\bigskip\noindent

\end{document}